\font
\newcommand{\subRn}{\mathbb{R}^n}
\newcommand{\Pp}{\mathcal{P}}
\newcommand{\cdummy}{\cdot}
\newcommand{\pp}{p(\cdot)}
\newcommand{\qq}{q(\cdot)}
\newcommand{\tmSep}{; }
\newcommand{\tmop}[1]{\ensuremath{\operatorname{#1}}}
\newcommand{\Lp}{L^{p(\cdot)}}
\newcommand{\essinf}[1]{\underset{#1}{\tmop{ess} \, \inf \hspace{0.1em}}}
\newcommand{\esssup}[1]{\underset{#1}{\tmop{ess} \, \sup \hspace{0.1em}}}
\newcommand{\R}{\mathbb{R}}
\newcommand{\N}{\mathbb{N}}
\newcommand{\tmsep}{, }
\newcommand{\tmstrong}[1]{\textbf{#1}}
\newcommand{\tmtextit}[1]{{\itshape{#1}}}
\newcommand{\tmtextmd}[1]{{\mdseries{#1}}}
\newcommand{\tmtextup}[1]{{\upshape{#1}}}
\newtheorem{theorem}{Theorem}[section]
\newcommand{\asmpt}{{\Bumpeq}}
\newtheorem{lemma}[theorem]{Lemma}
\newtheorem{cor}[theorem]{Corollary}
\newtheorem{prop}[theorem]{Proposition}
\theoremstyle{definition}
\newtheorem{definition}[theorem]{Definition}
\theoremstyle{definition}
\newtheorem{example}[theorem]{Example}
\theoremstyle{remark}
\newtheorem{remark}[theorem]{Remark}
\numberwithin{equation}{section}
\newcommand{\qqq}{q (\cdot)}
\begin{document}
\title{Modular inequalities for the maximal operator in variable Lebesgue
spaces}
\author{David Cruz-Uribe, OFS}
\address{David Cruz-Uribe, OFS \\
Department of Mathematics \\
 University of Alabama \\ Tuscaloosa \\ AL 35487, USA}
\email{dcruzuribe@ua.edu}
\author{Giovanni Di Fratta}
\address{Giovanni Di Fratta
\\Institute for Analysis and Scientific Computing
\\TU Wien
\\Wiedner Hauptstraße 8-10, 1040 Wien, Austria}
\email{giovanni.difratta@asc.tuwien.ac.at}
\author{Alberto Fiorenza}
\address{Alberto Fiorenza \\
Dipartimento di
Architettura \\
Universit\`a di Napoli \\ Via Monteoliveto, 3 \\
I-80134 Napoli, Italy\\
and Istituto per le Applicazioni del Calcolo
``Mauro Picone", sezione di Napoli \\
Consiglio Nazionale delle Ricerche \\
via Pietro Castellino, 111 \\
 I-80131 Napoli, Italy }
\email{fiorenza@unina.it}

\begin{abstract}
A now classical result in the theory of variable Lebesgue spaces due
to Lerner~\cite{lerner2005modular} is
that a modular inequality for the
  Hardy-Littlewood maximal function in $L^{\pp}(\R^n)$ 
  holds if and only if the exponent is constant.   We generalize this
  result and give a new and simpler proof.    We then find 
  necessary and sufficient conditions  for the validity of the weaker
  modular inequality
\[ \int_\Omega Mf(x)^{p(x)}\,dx \
\leq 
c_1 \int_\Omega |f(x)|^{q(x)}\,dx + c_2, \]
where $c_1,\,c_2$ are non-negative constants and $\Omega$ is any
measurable subset of $\R^n$.  As a corollary we get sufficient conditions for the
modular inequality
\[ \int_\Omega |Tf(x)|^{p(x)}\,dx \
\leq 
c_1 \int_\Omega |f(x)|^{q(x)}\,dx + c_2, \]
where $T$ is any operator that is bounded on $L^p(\Omega)$, $1<p<\infty$.
\end{abstract}

\subjclass{42B25\tmSep 46E30\tmSep 26D15}

\keywords{maximal function\tmsep variable Lebesgue space\tmsep modular
inequalities}

\thanks{The first author is supported by NSF Grant 1362425 and
  research funds from the Dean of the College of Arts \& Sciences, the
  University of Alabama.}
\thanks{The second author acknowledges support through the special research program \emph{Taming complexity in partial differential systems} funded by the Austrian Science Fund (FWF) under grant F65 and of the Vienna Science and Technology Fund (WWTF) through the research project \emph{Thermally controlled magnetization dynamics} (grant MA14-44).}

{\maketitle}

\section{Introduction}
The variable Lebesgue spaces are a generalization of the classical Lebesgue
spaces, where the constant exponent $p$ is replaced by a variable exponent
function $\pp$. They have been studied extensively for the
past twenty years, particularly for their applications to PDEs, the calculus
of variations 
{\cite{acerbi2002regularity,cruz2013variable,diening2011lebesgue}}, but also
for their use in a variety of physical and engineering contexts: the modeling
of electrorheological fluids {\cite{ruzicka2000electrorheological}}, the
analysis of quasi-Newtonian fluids {\cite{zhikov1997meyers}}, fluid flow in
porous media {\cite{amaziane2009nonlinear}}, magnetostatics
{\cite{cekic2012lp}} and image reconstruction {\cite{blomgren1997total}}.

Let $\Omega \subset \R^n$ be a Lebesgue measurable set, $0<| \Omega |\le \infty$.
Given a measurable exponent function $\pp : \Omega \rightarrow [1,
\infty)$, hereafter denoted by $\pp\in\Pp
(\Omega)$,
for any measurable set $E \subset \R^n$, $|E \cap \Omega | > 0$, we
set
\begin{equation*}
  p_- (E) = \essinf{x \in E \cap \Omega}_{} p (x), \qquad p_+ (E) = \esssup{x
  \in E \cap \Omega} p (x) .
\end{equation*}
For brevity, we set $p_- = p_- (\Omega)$ and $p_+ = p_+ (\Omega)$. The space
$\Lp (\Omega)$ is defined as the set of all measurable functions $f$ such that
for some $\lambda > 0$, $\rho_{p (\cdot), \Omega} (f / \lambda) < \infty$,
where $\rho_{p (\cdot), \Omega}$ is the modular functional defined by
\begin{equation*}
  \rho_{p (\cdot), \Omega} (f) = \int_{\Omega} |f (x) |^{p (x)} 
  \hspace{0.17em} d_{} x. 
\end{equation*}
In situations where there is no ambiguity we will simply write $\rho_{p
(\cdot)} (f)$ or $\rho (f)$. The space $\Lp (\Omega)$ is a Banach function
space when equipped with the Luxemburg norm
\begin{equation}
  \|f\|_{L^{p (\cdot)} (\Omega)} = \inf \{\lambda > 0 \kern 2pt \colon \rho_{p (\cdot), \Omega}
  (f / \lambda) \leqslant 1\} . \label{eq:normdef}
\end{equation}
When $\pp = p$, a constant, then $\Lp (\Omega) = L^p (\Omega)$ and
{\eqref{eq:normdef}} reduces to the classical norm on $L^p (\Omega)$. For the
properties of these spaces, we refer the reader
to~{\cite{cruz2013variable,diening2011lebesgue}}.

\medskip

Given a function $f\in L^1_{loc}(\R^n)$, the
(uncentered) Hardy-Littlewood maximal function $M_{} f$ is defined for $x
\in \R^n$ by
\begin{equation*}
  M_{} f (x) = \sup_{Q \ni x}  \frac{1}{|Q|}  \int_Q | f (y) | d_{} y,
\end{equation*}
where the supremum is taken over all cubes $Q \subset \R^n$ containing $x$ and
whose sides are parallel to the coordinate
axes. (See~\cite{cruz2013variable,zuazo2001fourier}.)   If $f\in
L^1_{loc}(\Omega)$, then we define $Mf$ by extending $f$ to be
identically $0$ on $\R^n\setminus \Omega$. 
 The following result,
proved by Neugebauer and the first and  third authors~{\cite{cruz2002maximal,cruz2004corrections}},
gives a nearly optimal sufficient condition on the exponent $\pp$ for
the maximal operator to satisfy a norm inequality on $L^{\pp}(\Omega)$. 

\begin{theorem}
  \label{old-thm}Given an open set $\Omega\subset \R^n$, let
  $\pp\in\Pp (\Omega)$ be such that $1<p_- \le p_+<\infty$ and
  $\pp \in L_{} H (\Omega)$, i.e., $p (\cdot)$ is log-H{\"o}lder
  continuous both locally and at infinity:
  \begin{gather*}
    |p (x) - p (y) | \leqslant \frac{C_0}{- \log (|x - y|)}, \qquad \, |x - y| <
    \frac{1}{2}, \quad x,y\in\Omega,\\
    |p (x) - p_\infty| \leqslant \frac{C_{\infty}}{\log (e + |x|)},
    \qquad  x\in\Omega \, .
  \end{gather*}
  Then $M$ is bounded on
  $L^{p (\cdummy)} (\Omega)$:
  \begin{equation}
    \label{normineq} \|M_{} f\|_{L^{\pp}(\Omega)} \leqslant C \|f\|_{L^{\pp}(\Omega)} .
  \end{equation}
\end{theorem}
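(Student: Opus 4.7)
The plan is to reduce \eqref{normineq} to a modular inequality and then establish that modular inequality via a pointwise control of $Mf(x)^{p(x)}$ by the classical Hardy--Littlewood maximal function applied to a suitable power of $|f|$. For the reduction, the unit-ball property of the Luxemburg norm (valid because $p_+<\infty$) gives that $\|g\|_{L^{p(\cdot)}(\Omega)}\le 1$ holds if and only if $\rho_{p(\cdot),\Omega}(g)\le 1$. Hence, after rescaling $f\mapsto f/\|f\|_{L^{p(\cdot)}(\Omega)}$, it suffices to exhibit a constant $C>0$ such that $\rho_{p(\cdot),\Omega}(f)\le 1$ implies $\rho_{p(\cdot),\Omega}(Mf)\le C$.

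\textbf{A pointwise estimate.} Fix any $p_0\in(1,p_-)$, possible since $p_->1$. The central analytic step is to show that, for every $f$ with $\rho_{p(\cdot),\Omega}(f)\le 1$,
\begin{equation*}
  Mf(x)^{p(x)} \le C\,\bigl[M\bigl(|f|^{p(\cdot)/p_0}\chi_{\{|f|>1\}}\bigr)(x)\bigr]^{p_0} + h(x),\qquad x\in\Omega,
\end{equation*}
where $h\in L^1(\R^n)$ is a fixed function with $\|h\|_{L^1(\R^n)}\le C$ independent of $f$; concretely $h(x)$ should be comparable to $(e+|x|)^{-m}$ for some exponent $m>n$ depending on $p_\pm$. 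For a cube $Q\ni x$ of diameter $\le 1$, the local log-H\"older condition permits replacement of $p(x)$ by $p(y)$ in factors of the form $|Q|^{p(x)-p(y)}$ at the cost of a universal multiplicative constant; for $Q$ of diameter $>1$ one plays the same game with $p_\infty$ in place of $p(x)$, now using the decay condition at infinity, and what is paid is precisely the decaying function absorbed into $h$. Jensen's inequality applied at the exponent $p_0>1$ then rewrites the average of $|f|$ in terms of the $p_0$-th power of the average of $|f|^{p(\cdot)/p_0}$, and the restriction to $\{|f|>1\}$ (with the small-part of $f$ subsumed into $h$) ensures the reverse substitution $(|f|^{p(\cdot)/p_0})^{p_0}=|f|^{p(\cdot)}$ will be usable at the next step.

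\textbf{Applying the constant-exponent bound for $M$.} Since $p_0>1$, the classical Hardy--Littlewood theorem gives $\|Mg\|_{L^{p_0}(\R^n)}\le C\|g\|_{L^{p_0}(\R^n)}$. Applying this with $g=|f|^{p(\cdot)/p_0}\chi_{\{|f|>1\}}$ (extended by zero outside $\Omega$), and integrating the pointwise bound over $\Omega$, one obtains
\begin{equation*}
  \rho_{p(\cdot),\Omega}(Mf) \le C\!\int_{\R^n}\!M\bigl(|f|^{p(\cdot)/p_0}\chi_{\{|f|>1\}}\bigr)^{p_0}dx + \|h\|_{L^1(\R^n)} \le C\!\int_\Omega\! |f|^{p(\cdot)}\,dx + C \le 2C,
\end{equation*}
which closes the reduction and hence establishes \eqref{normineq}.

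\textbf{Main obstacle.} The crux is the pointwise estimate, which must be proved uniformly in $Q$ and $x$ via a case analysis separating the local regime (handled by the $C_0$-estimate of log-H\"older continuity) from the far-field regime (handled by the $C_\infty$-estimate). The two hypotheses together are precisely what guarantees $h\in L^1(\R^n)$: neither alone suffices, and $p_+<\infty$ is essential for the infinity part to leave enough room to convert the variation of $p$ near $\infty$ into genuine polynomial decay. A secondary subtlety is that extending $f$ by zero outside $\Omega$ is indispensable in order to invoke the constant-exponent $L^{p_0}$-bound for $M$ on all of $\R^n$.
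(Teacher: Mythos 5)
The paper itself does not prove Theorem~\ref{old-thm}: it is quoted from \cite{cruz2002maximal,cruz2004corrections} (see also \cite[Theorem~3.16]{cruz2013variable}), so your argument can only be measured against those proofs. Your overall strategy is indeed their route: since $p_+<\infty$, homogeneity and the unit-ball property reduce \eqref{normineq} to showing that $\rho_{p(\cdot),\Omega}(f)\le 1$ implies $\rho_{p(\cdot),\Omega}(Mf)\le C$; the part of $f$ where $|f|>1$ is then handled by a Diening-type estimate on cube averages (local log-H\"older continuity for small cubes, the condition at infinity for large ones) followed by the constant-exponent maximal theorem on $L^{p_0}$ with $1<p_0<p_-$. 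Up to that point your reduction is sound.

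The genuine gap is in your central pointwise estimate, which is false as stated: the part of $f$ where $|f|\le 1$ cannot be ``subsumed into $h$'' for any fixed $h\in L^1(\R^n)$, in particular not for $h(x)\simeq (e+|x|)^{-m}$ with $m>n$. Already for a constant exponent $p\in(1,\infty)$ on $\Omega=\R^n$, take $f_R=\varepsilon_R\chi_{B(0,R)}$ with $\varepsilon_R=|B(0,R)|^{-1/p}\le 1$, so that $\rho_{p(\cdot),\R^n}(f_R)=1$ while $\{|f_R|>1\}=\emptyset$ and the first term on your right-hand side vanishes. For $|x|\le R$ one has $Mf_R(x)\ge \varepsilon_R$, hence $Mf_R(x)^{p}\ge |B(0,R)|^{-1}\simeq R^{-n}$; letting $R$ run over all large values and taking $|x|\simeq R$ forces $h(x)\gtrsim |x|^{-n}$ for large $|x|$, which is not integrable. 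In the actual proofs this part is not dominated pointwise at all: one splits $f=f_1+f_2$ with $f_2=f\chi_{\{|f|\le 1\}}$, uses the elementary exponent-comparison lemma $t^{p(x)}\le C\bigl(t^{p_\infty}+(e+|x|)^{-np_-}\bigr)$ for $0\le t\le 1$ (this, and its analogue for $|Q|^{p(x)-p(y)}$, is the only source of the fixed integrable error), and then applies the constant-exponent maximal theorem a second time, at the exponent $p_\infty$, to bound $\int (Mf_2)^{p_\infty}\,dx$ by $C\int |f_2|^{p_\infty}\,dx\le C$. So the claimed estimate needs to be reformulated along these lines; and since your sketch defers even the (correct) estimate for $f_1$ to an unspecified case analysis, the analytic crux of the theorem remains unproved.
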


In the constant exponent case, Theorem~\ref{old-thm} reduces to the
classical result that the maximal operator is bounded on
$L^p(\Omega)$, $1<p<\infty$.  In this case, the norm inequality is
equivalent to the modular inequality
\[ \int_\Omega Mf(x)^p\,dx \leq C\int_\Omega |f(x)|^{p}\,dx. \]
Similar modular inequalities hold in the scale of Orlicz spaces:  see,
for instance,~\cite{kokilashvili1991weighted}.
It is therefore natural to consider the analogous question of modular
inequalities for the maximal operator on the variable Lebesgue spaces:
\begin{equation}
  \int_{\Omega} M_{} f (x)^{p (x)}  \hspace{0.17em} d_{} x \, \leqslant \, C
  \int_{\Omega} |f (x) |^{p (x)}  \hspace{0.17em} d_{} x. \label{var-modular}
\end{equation}
Since inequality~\eqref{var-modular} implies the norm
inequality~\eqref{normineq}, it is clear that stronger hypotheses may
be needed on the exponent function $\pp$ for the modular inequality to
hold.  The following example from~\cite{cruz2003hardy} shows that
log-H\"older continuity is not sufficient and the modular inequality
need not hold even for a smooth exponent function.

\begin{example}\label{primoesempio}
  Let $\pp\in\Pp
(\R)$ be a measurable exponent function
  which is equal to $2$ on the interval $[0, 1]$ and equal to $3$ on $[2, 3]$
  (we make no other assumptions on $p (\cdummy)$). Define the sequence of
  functions $\{f_k\}_{k \in \N} = \{k \chi_{[0, 1]}\}_{k \in \N}$.  Then for any $x \in
  [2, 3]$, 
  \begin{equation*}
  M_{} f_k (x) \, \geqslant \, \frac{1}{3}  \int_0^3 \left|f_k (y) \right|\kern 2pt d y \,=\,  \frac{k}{3},
  \end{equation*}
  so that 
  \begin{equation*}
  \rho_{p (\cdot), \R} (M_{} f_k)\, \geqslant \,\int_2^3 \left(\frac{k}{3}\right)^3 \kern 1pt d x \, =\, \frac{k^3}{27}\,.
   \end{equation*}
  On the other hand $\rho_{p (\cdot), \R} (f_k) = k^2$,
so {\eqref{var-modular}} cannot hold.
\end{example}

\medskip

In fact,  when $\Omega=\R^n$ and $p_+<\infty$, Lerner~\cite{lerner2005modular}
showed that inequality {\eqref{var-modular}} never holds unless $\pp$
is constant.

\begin{theorem}
  \label{thm:lerner-modular}Let $\pp\in\Pp
(\subRn)$, $p_+<\infty$. Then the  modular inequality
 \[
    \int_{\subRn} M_{} f (x)^{p (x)}  \hspace{0.17em} d_{} x \, \leqslant \, C_{\pp,n}
    \int_{\subRn} |f (x) |^{p (x)}  \hspace{0.17em} d_{} x,
\]
  where $C_{\pp,n}$ is a constant depending on $n, p (\cdot)$ but independent of $f$,
  holds if and only if there is a constant $p>1$ such that $\pp = p$
  almost everywhere.
\end{theorem}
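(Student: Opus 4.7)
The forward direction is immediate: if $\pp \equiv p$ a.e.\ for some constant $p > 1$, then the displayed modular inequality is just the classical strong $(p,p)$ bound for the Hardy--Littlewood maximal operator on $L^p(\R^n)$.

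For the converse I would argue by contradiction, generalizing the mechanism of Example~\ref{primoesempio}: assume that the modular inequality holds, and show that $\pp$ must then be a.e.\ equal to a constant strictly greater than $1$.

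The key step is a separation-of-levels construction. Suppose for contradiction that $\pp$ is not a.e.\ constant. Since $p_+ < \infty$, a short measure-theoretic argument produces $\alpha < \beta$ in $[1,p_+]$ such that both $A_1 = \{x \in \R^n : p(x) \leq \alpha\}$ and $A_2 = \{x \in \R^n : p(x) \geq \beta\}$ have positive Lebesgue measure; otherwise the essential range of $\pp$ would collapse to a single point. Intersecting with a sufficiently large cube, I may extract bounded subsets $E_1 \subset A_1$ and $E_2 \subset A_2$ of positive measure contained in a common cube $Q$. I then test the modular inequality on the one-parameter family $f_\lambda = \lambda\chi_{E_1}$, $\lambda \geq 1$. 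Since $p \leq \alpha$ on $E_1$, one has $\rho(f_\lambda) \leq \lambda^\alpha |E_1|$. On the other hand, for every $x \in E_2 \subset Q$, the cube $Q$ contains $E_1$, so $Mf_\lambda(x) \geq \lambda|E_1|/|Q| = c\lambda$ with $c := |E_1|/|Q|$. Choosing $\lambda \geq 1/c$ and using $p \geq \beta$ on $E_2$ yields $\rho(Mf_\lambda) \geq (c\lambda)^\beta |E_2|$. The hypothesized inequality would then force $\lambda^{\beta - \alpha} \leq C'$ uniformly for all large $\lambda$, contradicting $\beta > \alpha$. Finally, the case $\pp \equiv 1$ is excluded by the classical fact that $M$ is not of strong type $(1,1)$: for any ball $B$, $M\chi_B \notin L^1(\R^n)$.

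The only step with any real content is the measure-theoretic separation of $A_1$ and $A_2$; everything else is a direct scaling comparison. I expect no serious obstacle here, and the argument isolates the underlying obstruction: the maximal operator transports $L^1$ mass from a low-$\pp$ region into a high-$\pp$ region, creating a power gap in $\lambda$ that no linear modular estimate can absorb.
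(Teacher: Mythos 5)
Your proposal is correct and is essentially the paper's own argument: the scaling test $f_\lambda=\lambda\chi_{E_1}$ with the lower bound $Mf_\lambda\geq \lambda|E_1|/|Q|$ on a high-exponent set inside a common cube, yielding the power-gap contradiction $\lambda^{\beta-\alpha}\leq C'$, is exactly the mechanism of the proof of Proposition~\ref{thm:maintheorem} (itself inspired by Example~\ref{primoesempio}), from which the paper deduces this theorem via Corollary~\ref{Cor:Lernergen}, with the classical $L^p$ bound and the failure of the strong $(1,1)$ inequality handling the forward direction and the exclusion of $p=1$ just as you do. The only cosmetic difference is that you work directly with the level sets $\{p\leq\alpha\}$ and $\{p\geq\beta\}$ enclosed in one large cube, which in the case $\Omega=\R^n$, $q(\cdot)=p(\cdot)$ lets you skip the reduction in Lemma~\ref{Lemma:equivalence}.
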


\begin{remark}
The original proof of Theorem~\ref{thm:lerner-modular}
in~\cite{lerner2005modular} (see
also~\cite[Theorem~3.31]{cruz2013variable}) used the theory of
Muckenhoupt $A_p$ weights from harmonic analysis.  For a simpler
proof, see~\cite{izuki2013} and Corollary~\ref{Cor:Lernergen} below.
\end{remark}

\medskip

However, weaker modular inequalities that include an error term are
true.  These results played a role in the original proofs of
Theorem~\ref{old-thm}.   For
instance, we have the following result {\cite[Theorem 3.33]{cruz2013variable}}. 

\begin{theorem}
  \label{thm:modular-log}Given $\pp \in \Pp (\R^n)$ such that
  $1<p_-\leq p_+<\infty$ and $\pp \in L_{} H
  (\R^n)$, suppose $f \in \Lp (\R^n)$ and $\|f\|_{\pp} \leqslant 1$. 
  Then
  \begin{equation*}
    \label{eqn:mod-log1} \int_{\subRn} M_{} f (x)^{p (x)}  \hspace{0.17em}
    d_{} x \, \leqslant \,  C _{\pp,n}\int_{\subRn} |f (x) |^{p (x)}  \hspace{0.17em} d_{} x + C_{\pp,n}
    \int_{\subRn} \frac{d_{} x}{(e + |x|)^{np_-}},
  \end{equation*}
where the constant $C_{\pp,n}$  depends  on $n, p (\cdot)$ but is independent of $f$.
\end{theorem}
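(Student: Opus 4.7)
The strategy is to deduce the modular inequality from a pointwise \emph{Diening averaging estimate}, which bounds a cube average $\bigl(|Q|^{-1}\int_Q |f|\,dy\bigr)^{p(x)}$ in terms of $|Q|^{-1}\int_Q |f(y)|^{p(y)}\,dy$ plus an integrable decay error. Taking the supremum over cubes $Q\ni x$ converts this into a pointwise bound for $Mf(x)^{p(x)}$. To integrate the resulting bound against Lebesgue measure, one replaces $|f|$ by $g:=|f|^{p(\cdot)/p_-}$, which satisfies $\|g\|_{L^{p_-}(\R^n)}\leq 1$ (because $\|f\|_{\pp}\leq 1$ forces $\rho_{p(\cdot)}(f)\leq 1$), and runs the averaging argument for $g$; this yields a pointwise bound of the form $Mf(x)^{p(x)}\leq C[Mg(x)]^{p_-}+C(e+|x|)^{-np_-}$, to which the classical $L^{p_-}$-boundedness of $M$ (available since $p_->1$) may be applied.

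To prove the averaging estimate, I would split $f=f_1+f_2$ with $f_1=f\chi_{\{|f|\geq 1\}}$ and $f_2=f\chi_{\{|f|<1\}}$ and handle each piece separately. The part $f_1$ is controlled using local log-H\"older continuity on small cubes ($|Q|\leq 1$), which bounds $|Q|^{p(x)-p(y)}$ for $y\in Q$, and using decay log-H\"older continuity on large cubes ($|Q|>1$), which compares $p(y)$ and $p(x)$ via $p_\infty$; since $|f_1|\geq 1$ on its support, $|f_1|^{p(y)}\geq |f_1|^{p_-}$, and the exponent conversion proceeds in a favourable direction. The part $f_2$ is more delicate because $|f_2|\leq 1$ reverses this inequality: a threshold argument based on whether $|f_2(y)|$ is larger or smaller than a suitable negative power of $(e+|y|)$ absorbs the unfavourable contribution into the decay error, with the decay log-H\"older condition providing the needed control on the exponent drift.

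The main obstacle is the careful bookkeeping required so that all error terms arising from the two cube-size regimes and from the two pieces of the $f=f_1+f_2$ decomposition combine into a single integrable majorant controlled by $\int_{\R^n}(e+|x|)^{-np_-}\,dx$; the finiteness of this integral relies precisely on the hypothesis $p_->1$. The overall scheme is Diening's, carried out in detail in~\cite[Ch.~3]{cruz2013variable}.
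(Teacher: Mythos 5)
Your plan is sound and follows essentially the argument of the source the paper relies on: the paper itself does not prove Theorem~\ref{thm:modular-log} but quotes it from \cite[Theorem~3.33]{cruz2013variable}, whose proof is exactly the Diening-type scheme you describe (pointwise averaging estimate obtained from the $f=f_1+f_2$ split and the local and at-infinity log-H\"older conditions, reduction to $g=|f|^{p(\cdot)/p_-}$ with $\|g\|_{L^{p_-}(\R^n)}\le 1$ via $\rho_{p(\cdot)}(f)\le 1$, then the $L^{p_-}$-boundedness of $M$, with $(e+|x|)^{-np_-}$ as the integrable error term). Be aware that what you have written is a plan rather than a proof: the technical core --- the averaging estimate with its small-cube/large-cube regimes and the threshold argument for $f_2$ --- is indicated but not carried out, so as written it defers to \cite[Chapter~3]{cruz2013variable} for the details.
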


\medskip

The goal of this paper is to give necessary and sufficient conditions
for modular inequalities of the form
\begin{equation}
  \int_{\Omega} M_{} f (x)^{p (x)} d_{} x \leqslant c_1  \int_{\Omega} |f (x) |^{q
  (x)} d_{} x + c_2, \label{mainmodular}
\end{equation}
to hold for all measurable functions $f$, where $\pp,\,\qq \in \Pp
(\Omega)$, and $c_1 > 0$, $c_2 \ge 0$ are constants depending on $n,
\pp, \qq$ and $|\Omega |$, but are independent of $f$.  We are
interested in the weakest possible conditions on the exponent
functions $\pp$ and $\qq$ for \eqref{mainmodular} to hold.  In
particular, we want to prove modular inequalities without assuming any smoothness
conditions on the exponents.  

In this paper we will only consider the case $p (\cdot)\not\equiv 1$.
The endpoint case when $p (\cdot)\equiv 1$ is substantially different.
If $\Omega$ is bounded and $q_->1$, then \eqref{mainmodular} always
holds: this is an immediate consequence of
\cite[Theorem~1.2]{CFtransAMS}.  If $\Omega=\R^n$,
then~\eqref{mainmodular} never holds, since $Mf$ is never in
$L^1(\R^n)$ unless $f=0$ a.e.  More generally, given any set $\Omega$
with infinite measure, then arguing as in
Example~\ref{example:infinite} below, we would have $L^{\qq}(\Omega)
\subset L^1(\Omega)$, which is impossible:
see~\cite[Theorem~2.45]{cruz2013variable}. 
When $q_-=1$ the problem of
characterizing $\qq$ is open.  Some delicate results
in~\cite{futamuramizuta, hastoappr1} show that this problem depends on
how quickly $q(\cdot)$ approaches $1$.

\medskip

Our two main results completely characterize the exponents $\pp$ and
$\qq$ so that the modular inequality holds.  Our characterization
depends strongly on whether $\Omega$ has finite or infinite measure;
When $\Omega$ has finite measure our result is remarkably simple.

\begin{theorem}\label{primissimo}
  Given a set $\Omega \subseteq \R^n$, $0<| \Omega | <\infty$,  let $p(\cdot), \qqq \in \Pp (\Omega)$, $p (\cdot)\not\equiv 1$. Then the modular inequality
  {\eqref{mainmodular}} holds if and only if $p_+ (\Omega) \leqslant q_-
  (\Omega)$.
\end{theorem}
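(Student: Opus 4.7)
The plan is to prove sufficiency and necessity separately, exploiting $|\Omega|<\infty$ at several places.

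For sufficiency, assume $p_+\le q_-$. Because $p(\cdot)\not\equiv 1$ one has $p_+>1$, so the classical Hardy--Littlewood maximal inequality yields a constant $C=C(n,p_+)$ with $\int_{\R^n}(Mf)^{p_+}\,dx\le C\int_{\R^n}|f|^{p_+}\,dx$. I would apply the elementary pointwise bound $Mf(x)^{p(x)}\le 1+Mf(x)^{p_+}$ (splitting on $Mf(x)\lessgtr 1$), integrate over $\Omega$, and use that $f$ is extended by zero outside $\Omega$ to invoke the classical inequality. This reduces the problem to bounding $\int_\Omega|f(x)|^{p_+}\,dx$, which by the analogous splitting on $|f(x)|\lessgtr 1$ and the inequality $|f(x)|^{p_+}\le|f(x)|^{q(x)}$ when $|f(x)|>1$ (since $p_+\le q_-\le q(x)$) is at most $|\Omega|+\int_\Omega|f(x)|^{q(x)}\,dx$. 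Collecting constants produces \eqref{mainmodular} with $c_1=C$ and $c_2=(C+1)|\Omega|$.

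For necessity, I would argue by contrapositive in the spirit of Example~\ref{primoesempio}. Suppose $p_+>q_-$ and choose numbers $\alpha,\beta$ with $q_-<\alpha<\beta<p_+$. Both $\{x\in\Omega:q(x)<\alpha\}$ and $\{x\in\Omega:p(x)>\beta\}$ have positive measure; by $\sigma$-finiteness I can extract bounded subsets $E$ and $F$ of positive measure. Fix a cube $Q\subset\R^n$ with $E\cup F\subset Q$, and set $c:=|E|/|Q|>0$. Testing \eqref{mainmodular} against $f_k:=k\chi_E$ for integers $k\ge 1/c$ gives, on the right-hand side, $\int_\Omega|f_k|^{q(x)}\,dx\le k^{\alpha}|E|$ (since $q(x)<\alpha$ on $E$ and $k\ge 1$); on the left, $Mf_k(x)\ge ck$ for every $x\in F$ because $Q$ is admissible in the definition of $Mf_k(x)$, and since $p(x)>\beta$ on $F$ we obtain $\int_\Omega Mf_k(x)^{p(x)}\,dx\ge c^{\beta}k^{\beta}|F|$. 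The hypothetical inequality thus yields $c^{\beta}k^{\beta}|F|\le c_1 k^{\alpha}|E|+c_2$, which fails for $k$ sufficiently large because $\beta>\alpha$.

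The most delicate step is the necessity direction. Because $\Omega$ has finite but possibly unbounded support, one must carefully choose bounded test sets $E,F$ of positive measure and a single cube $Q$ controlling both, so that the lower bound $Mf_k\ge ck$ on $F$ holds with a constant $c$ independent of $k$. No regularity hypothesis on $p(\cdot)$ or $q(\cdot)$ is used at any point; the characterization depends only on the pointwise relationship between $p_+(\Omega)$ and $q_-(\Omega)$, as the statement predicts.
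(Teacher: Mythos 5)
Your proposal is correct and follows essentially the same route as the paper: sufficiency by splitting at height $1$ and using the boundedness of $M$ on $L^{p_+}(\Omega)$ (valid since $p(\cdot)\not\equiv 1$ gives $p_+>1$) together with $|f|^{p_+}\le |f|^{q(x)}$ where $|f|>1$, and necessity by the Example~\ref{primoesempio}-type test function with exponents $q_-<\alpha<\beta<p_+$. The only cosmetic difference is in necessity: the paper first proves Lemma~\ref{Lemma:equivalence} to locate a single cube $Q$ with $p_+(Q)>q_-(Q)$ and takes its test sets inside $Q\cap\Omega$, whereas you take bounded positive-measure subsets of the global level sets $\{p>\beta\}$ and $\{q<\alpha\}$ and enclose both in one large cube, which is an equivalent and equally valid device.
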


As our second result below shows, the assumption that $|\Omega|<\infty$ is
critical in Theorem~\ref{primissimo}.  But to motivate this result, we
first give the following example.

\begin{example} \label{example:infinite}
If $\Omega \subseteq \R^n$, $| \Omega | = \infty$, and if $\pp \in \Pp (\Omega)$, $\qqq \in \Pp (\Omega)$, then
the assumption that $p_+ (\Omega) \leq q_- (\Omega)$
is not sufficient for {\eqref{mainmodular}} to be true.
We first consider the case $p_+(\Omega)=q_-(\Omega)$.  Fix an open set
$\Omega$, $|\Omega|=\infty$, and constants
$1<p<q<\infty$. Define  $p (\cdot) \equiv p $ and
\[ q (x) = \left\{ \begin{array}{ll}
     p & \text{if } x \in Q\\
     q & \text{if } x \in \Omega \backslash Q,
   \end{array} \right. \]
where $Q\subset \Omega$ is a cube.  Then $p_+ (\Omega) = q_-
(\Omega)$.  Suppose
{\eqref{mainmodular}}  holds; then we would have
\begin{align*}
  \int_{\Omega} | f (x) |^p  \hspace{0.17em} d_{} x & \leqslant 
  \int_{\Omega} M_{} f (x)^p  \hspace{0.17em} d_{} x \\
  & \leqslant  c_1 \int_Q | f (x) |^p  \hspace{0.17em} d_{} x + c_1
  \int_{\Omega \backslash Q} | f (x) |^q  \hspace{0.17em} d_{} x + c_2 .
\end{align*}
But then, if we let $f \assign g 
\chi_{\Omega \backslash Q}$, we would get the
embedding $^{} L^q (\Omega \backslash Q) \subset L^p (\Omega \backslash Q)$,
which does not hold when $p < q$ since $\Omega$ has infinite measure
{\cite{kabaila1981inclusion,villani1985another}}. 

The case $p_+ (\Omega) < q_-
(\Omega)$ is obtained from the same argument by taking $Q = \emptyset$.
\end{example}

\medskip

The problem in Example~\ref{example:infinite} arises because the
exponents $\pp$ and $\qq$ behave  differently at
infinity.    To avoid this, we make the following definition.

\begin{definition}
Given a set $\Omega$, $|\Omega|=\infty$, let $\mathcal{F}_{\Omega}$
denote the collection of subsets of  $\Omega$ that havie infinite
measure.   Given $p (\cdot), q (\cdot) \in \mathcal{P} (\Omega)$, we
say that $\pp$ and $\qq$ {\tmstrong{touch at infinity}}, and denote this
by $p (\cdot) \asmpt q (\cdot)$, if for every $E \in \mathcal{F}_{\Omega}$,
\[ p_+ (E) = p_+ (\Omega) = q_- (\Omega) =  q_- (E). \]
\end{definition}

The exponents in Example~\ref{example:infinite} do not touch at
infinity.  We consider three additional examples.

\begin{example}
  Let $\Omega=\R$.  
\begin{enumerate}

\item The exponents $p(x)=2-(1+x^2)^{-1}$,
  $q(x)=2+(1+x^2)^{-1}$ touch at infinity.  

\item On the other hand, if we
  let 
  $ \tilde q(x)=a+(1+x^2)^{-1}$, $a>2$, then $p (\cdot)$ and
  $\tilde q (\cdot)$ do not touch at infinity.   

\item Finally, if $p(x)\equiv 2$ and
  $q(x)=2+\chi_{E}$, where $E$ is any bounded measurable set, then
  $\pp$ and $\qq$ touch at infinity.
\end{enumerate}
\end{example}

We can now state our second main result, characterizing the modular
inequality on sets $\Omega$ with infinite measure.

\begin{theorem} \label{main} 
Given a set $\Omega\subseteq \R^n$, $|\Omega|=\infty$,
  let $p (\cdot), \qqq \in \Pp (\Omega)$, $p (\cdot)\not\equiv
  1$. Define
  $D \assign \{ x \in \Omega : p (x) < q (x) \} \neq \emptyset$. 
Then the
  following  are equivalent:

\begin{description}[align=right,labelwidth=1.1cm]
\item[\quad\tmtextmd{\tmtextup{$(i)$}}] The modular inequality {\eqref{mainmodular}} holds;
\medskip
\item[\quad\tmtextmd{\tmtextup{$(i_{}
    i)$}}] $p (\cdot) \asmpt q (\cdot)$ and $L^{q (\cdot)} (\Omega)
    \hookrightarrow L^{p (\cdot)} (\Omega)$;
\medskip
\item[\quad\tmtextmd{\tmtextup{$(i_{} i_{}
    i)$}}] $p (\cdot) \asmpt q (\cdot)$ and there exists  $\lambda > 1$
    such that
    \begin{equation}
      \rho_{r (\cdot), D} (1 / \lambda) = \int_D \lambda^{- r (x)} d_{} x <
      \infty, \label{eq:condsuffmaximal1}
    \end{equation}
where $r (\cdot)$ is the defect exponent defined by $\frac{1}{r (x)} =
    \frac{1}{p (x)} - \frac{1}{q (x)}$;
\medskip 
 \item[\quad\tmtextmd{\tmtextup{$(i_{}
    v)$}}] $p (\cdot) \asmpt q (\cdot)$ and there exists a measurable
    function $\omega$, $0 < \omega (\cdot) \leqslant 1$, such that
    \begin{equation}
      \rho_{p (\cdot), D} (\omega) = \int_D \omega (x)^{p (x)} d_{} x < \infty
      \label{eq:condsonomega}
    \end{equation}
    and
    \begin{equation}
      \| \omega (\cdot)^{- | p_+ - p (\cdot) |} \|_{L^{\infty} (D)} \cdot \| \omega (\cdot)^{-
      | q (\cdot) - p_+ |} \|_{L^{\infty} (D)} < \infty .
      \label{eq:condsonomega2}
    \end{equation}
\end{description}
\end{theorem}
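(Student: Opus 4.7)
The plan is to establish the cycle (i) $\Rightarrow$ (ii) $\Rightarrow$ (iii) $\Rightarrow$ (iv) $\Rightarrow$ (i). The implication (i) $\Rightarrow$ (ii) is direct, the equivalences (ii) $\Leftrightarrow$ (iii) $\Leftrightarrow$ (iv) all characterize the embedding $L^{q(\cdot)}(\Omega) \hookrightarrow L^{p(\cdot)}(\Omega)$ and follow from existing theory of variable Lebesgue spaces, and the reverse direction (iv) $\Rightarrow$ (i) is the main obstacle.

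For (i) $\Rightarrow$ (ii), the pointwise inequality $|f(x)| \leq Mf(x)$ a.e.\ turns (i) into the modular embedding $\rho_{p(\cdot)}(f) \leq c_1 \rho_{q(\cdot)}(f) + c_2$, which a standard homogenization argument (replacing $f$ by $f/\mu$ and letting $\mu$ absorb $c_2$) upgrades to the norm embedding $L^{q(\cdot)}(\Omega) \hookrightarrow L^{p(\cdot)}(\Omega)$. For the touching condition $p(\cdot) \asmpt q(\cdot)$, I proceed by contradiction: if it fails, there is some $E \in \mathcal{F}_\Omega$ on which $p_+(E) < p_+(\Omega)$ or $q_-(E) > q_-(\Omega)$, and a variant of Example~\ref{example:infinite}, testing the modular inequality against $f = \lambda\chi_Q$ for cubes $Q \subset E$ with $\lambda$ chosen to exploit the deviation, contradicts the embedding.

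For the equivalence of (ii), (iii), (iv), I invoke the theory of embeddings between variable Lebesgue spaces developed in \cite{cruz2013variable,diening2011lebesgue}, which characterize $L^{q(\cdot)}(\Omega) \hookrightarrow L^{p(\cdot)}(\Omega)$ in terms of the integrability of the defect exponent $r(\cdot)$. The passage (iii) $\Rightarrow$ (iv) is carried out by choosing $\omega$ essentially of the form $\lambda^{-r(x)/p(x)}$, suitably truncated to ensure $\omega \leq 1$, and verifying \eqref{eq:condsonomega}--\eqref{eq:condsonomega2} by elementary computation using $1/r = 1/p - 1/q$ together with $p(\cdot) \asmpt q(\cdot)$; the reverse direction extracts $\lambda$ from the $L^\infty$ bounds in \eqref{eq:condsonomega2} and combines the resulting data with Young's inequality to recover the embedding.

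The main obstacle is (iv) $\Rightarrow$ (i). The crucial observation is that $p(\cdot) \asmpt q(\cdot)$ forces $p(x) = q(x) = a$ on $\Omega \setminus D$, where $a := p_+(\Omega) = q_-(\Omega) > 1$ (the strict inequality uses $p(\cdot) \not\equiv 1$). On $D$ I split according to whether $Mf(x) > \omega(x)$ or $Mf(x) \leq \omega(x)$; writing $Mf^{p(x)} = Mf^{a} \cdot Mf^{p(x) - a}$ with $p(x) - a \leq 0$ and using $Mf > \omega$ in the first case yields the pointwise bound
\[
Mf(x)^{p(x)} \,\le\, \|\omega^{-|p_+ - p(\cdot)|}\|_{L^\infty(D)} \, Mf(x)^{a} \;+\; \omega(x)^{p(x)},
\]
and a symmetric case analysis applied to $|f|$ (splitting on $|f(x)| > \omega(x)$ vs $|f(x)| \leq \omega(x)$, using $q(x) - a \geq 0$) gives
\[
|f(x)|^{a} \,\le\, \|\omega^{-|q(\cdot) - p_+|}\|_{L^\infty(D)} \, |f(x)|^{q(x)} \;+\; \omega(x)^{p(x)}.
\]
On $\Omega \setminus D$ the identities $Mf^{p(x)} = Mf^{a}$ and $|f|^{q(x)} = |f|^{a}$ make both bounds tautological. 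Combining them with the classical $L^{a}$-boundedness of $M$ (valid since $a > 1$) yields
\[
\int_\Omega Mf(x)^{p(x)}\,dx \,\le\, c_1 \int_\Omega |f(x)|^{q(x)}\,dx + c_2,
\]
with $c_2$ absorbing the finite integral $\int_D \omega(x)^{p(x)}\,dx$. The delicate point is the pointwise case analysis, which hinges on the sign conditions $p(x) - a \leq 0 \leq q(x) - a$ on $D$ together with $\omega \leq 1$, and on ensuring that all constants depend only on $n$, $p(\cdot)$, $q(\cdot)$ and not on $f$.
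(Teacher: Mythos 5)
Your architecture matches the paper's: the same cycle of implications, (ii)$\Rightarrow$(iii) by citing the known characterization of the embedding, (iii)$\Rightarrow$(iv) by essentially the paper's truncated choice $\omega=\lambda^{-r(\cdot)/p(\cdot)}$ (note the truncation is not needed to force $\omega\le 1$, which is automatic since $\lambda>1$, but to keep the norms in \eqref{eq:condsonomega2} finite where $q(\cdot)$ is large; the finiteness of $|\{q(\cdot)>\kappa\}|$ for some $\kappa$ is precisely where $p(\cdot)\asmpt q(\cdot)$ enters), and your two pointwise bounds in (iv)$\Rightarrow$(i) are exactly the paper's estimates in integrated form: split at $\omega$, compare $p(x)$ and $q(x)$ with $p_+$, use the boundedness of $M$ on $L^{p_+}(\Omega)$, and absorb $\rho_{p(\cdot),D}(\omega)$ into $c_2$. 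That part of the proposal is correct.

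The genuine gap is in (i)$\Rightarrow$(ii), in the argument that \eqref{mainmodular} forces $p(\cdot)\asmpt q(\cdot)$. First, your stated target of contradiction --- that failure of touching ``contradicts the embedding'' --- cannot work: the embedding $L^{q(\cdot)}(\Omega)\hookrightarrow L^{p(\cdot)}(\Omega)$ does not imply touching (Remark~\ref{rmk:a2impla3} gives exponents on $(2,\infty)$ for which \eqref{eq:condsuffmaximal1} and the embedding hold, yet $p_+(\Omega)=4>2=q_-(\Omega)$), so a correct proof must contradict the modular inequality itself, additive constant included. Second, the test functions $f=\lambda\chi_Q$ with $Q$ a cube inside $E$ cannot produce that contradiction: an arbitrary $E\in\mathcal{F}_\Omega$ need not contain any cube, and even if it did, the support has fixed finite measure, so for $0<\lambda<1$ the resulting inequality $\lambda^{p_+(Q)}|Q|\le c_1\lambda^{q_-(Q)}|Q|+c_2$ is harmless because of $c_2$, while for $\lambda>1$ the exponent comparison goes the wrong way since $p_+(\Omega)\le q_-(\Omega)$ (Proposition~\ref{thm:maintheorem}). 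Also, your negation of touching omits the case $p_+(E)=p_+(\Omega)<q_-(\Omega)=q_-(E)$ for every $E$; the relevant consequence of failure is simply that $p_+(E)<q_-(E)$ for some $E\in\mathcal{F}_\Omega$. The missing device is to let the measure of the support grow: take $f=\lambda\chi_{E\cap B_\delta}$ with $0<\lambda<1$, divide by $|E\cap B_\delta|\to\infty$ as $\delta\to\infty$ to eliminate $c_2$, and then let $\lambda\to 0$ in $\lambda^{p_+(E)}\le c_1\lambda^{q_-(E)}$ to conclude $p_+(E)\ge q_-(E)$, hence equality. With that replacement your proof closes; as written, this step fails.
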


\begin{remark}   \label{rmk:a2impla3}
There is a close connection between the embedding $L^{q (\cdot)} (\Omega)
    \hookrightarrow L^{p (\cdot)} (\Omega)$ and
    condition~\eqref{eq:condsuffmaximal1}:  we have that this embedding
    holds if and only if $p(x)\leq q(x)$
    a.e. and~\eqref{eq:condsuffmaximal1} holds  (see \cite[Theorem
    2.45]{cruz2013variable}).   However, \eqref{eq:condsuffmaximal1} is independent
    of $p (\cdot) \asmpt q (\cdot)$.   For one direction, let
    $\Omega=(2,\infty)$ and define $\pp$ and $\qq$ by
\[  \frac{1}{p(x)} = \frac{1}{2}-\frac{1}{x^2}, \qquad \frac{1}{q(x)}
  = \frac{1}{p(x)} - \frac{1}{x^4}. \]
Then we have $p(x)\leq q(x)$ and the defect exponent is $r(x)=x^4$,
so~\eqref{eq:condsuffmaximal1} holds  for any $\lambda>1$.   Thus $L^{q (\cdot)} (\Omega)
    \hookrightarrow L^{p (\cdot)} (\Omega)$.  However, $p_+(\Omega)=4$
    and $q_-(\Omega)=2$, so we do not have that $\pp$ and $\qq$ touch
    at infinity.

Conversely, let $\Omega=(e^9,\infty)$ and define $\pp$ and $\qq$
by 
\begin{equation*}
p(x)=2, \qquad q(x)=\frac{2\log\log(x)}{(\log\log (x)-2)}. 
    \end{equation*}
Then $q_-(\Omega)=2$ and it decreases to this value as $x\rightarrow
\infty$, so $p (\cdot) \asmpt q (\cdot)$.   However, the defect
exponent is   $r(x)=\log\log (x)$ and the integral
in~\eqref{eq:condsuffmaximal1} is infinite for any value of $\lambda>1$. 
    \end{remark}

\begin{remark}
The condition~\eqref{eq:condsuffmaximal1} is closely related to the
problem of finding sufficient conditions for the maximal operator to
be bounded on $L^{\pp}(\Omega)$ when $\Omega$ is unbounded.
Nekvinda~\cite{MR2057644} 
showed that the log-H\"older continuity condition at infinity in Theorem~\ref{old-thm}
can be replaced by a weaker integral condition: for some $\lambda>1$,
\[  \int_D \lambda^{-r(x)}\,dx <\infty, \]
where now  $r(\cdot)$ is defined by $\frac{1}{r(x)}= \left|
    \frac{1}{p(x)}-\frac{1}{p_\infty}\right|$ and $D=\{ x \in \Omega :
  p(x) \neq p_\infty \}$.   For a thorough discussion of this
  condition and its relationship with~\eqref{eq:condsuffmaximal1} and the
  associated embedding theorem,
  see~\cite[Section~4.1]{cruz2013variable}.
\end{remark}


\begin{remark}
As a consequence of the assumption that $p (\cdot)
  \asmpt q (\cdot)$, we have that for any $R>0$, 
\[ p_+(\Omega\setminus B(0,R)) = p_+(\Omega) = q_-(\Omega) =
  q_-(\Omega\setminus B(0,R)). \]
Therefore, $\pp$ and $\qq$ have a common asymptotic value in the sense
that
\[ p_+(\Omega) = \limsup_{|x|\rightarrow \infty} p(x) 
= \liminf_{|x|\rightarrow \infty} = q_-(\Omega).  \]
Denote this asymptotic value by $p_\infty$; it is a generalization of
the value $p_\infty$ that occurs in the definition of log-H\"older
continuity in Theorem~\ref{old-thm} or in the Nekvinda condition
discussed above.   In particular, 
  if  $| D | = \infty$, then condition~{\eqref{eq:condsonomega2}} is
  equivalent to
  \begin{equation*}
    \| \omega (\cdot)^{- | p_{\infty} - p (\cdot) |} \|_{L^{\infty} (D)} \cdot \|
    \omega (\cdot)^{- | q (\cdot) - p_{\infty} |} \|_{L^{\infty} (D)} < \infty.
  \end{equation*}
\end{remark}

\begin{remark}
It is a consequence of the proof that if the modular
inequality~\eqref{mainmodular} holds, then $c_1 \geqslant 1$: see
  the proof of the implication $(i) \Rightarrow (i_{} i)$.
\end{remark}

\medskip

In the proof of Theorems~\ref{primissimo} and~\ref{main}, we  use
the definition of the maximal operator to prove necessity.  In the
proof of sufficiency, we only use the fact that the maximal operator
is a bounded operator on $L^p(\Omega)$, $1<p<\infty$.  Therefore, as
an immediate corollary of the proofs we get the following result.

\begin{cor} \label{cor:sio}
Given a set $\Omega$ and $\pp,\,\qq \in \Pp(\Omega)$, suppose that
either $|\Omega|<\infty$ and $p_+(\Omega)\leq q_-(\Omega)$, or
$|\Omega|=\infty$,  $p (\cdot)  \asmpt q (\cdot)$,
and~\eqref{eq:condsuffmaximal1} holds.   If $T$ is
any operator that is bounded on $L^p(\Omega)$ for all $1<p<\infty$, then
\[ \int_\Omega |Tf(x)|^{p(x)}\,dx \leq c_1\int_\Omega
  |f(x)|^{q(x)}\,dx +c_2, \]
with positive constants $c_1,\,c_2$ that depend on $\pp$, $\qq$ and
$T$ but not on $f$. 
\end{cor}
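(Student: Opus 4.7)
The plan is to observe that the hypotheses of Corollary~\ref{cor:sio} match precisely the sufficient conditions of Theorems~\ref{primissimo} and~\ref{main}, and then to inspect the sufficiency direction of those proofs, confirming that the maximal operator enters only through its $L^p(\Omega)$-boundedness for $1<p<\infty$. Once this meta-observation is in place, replacing $M$ by any $T$ with the same $L^p$ boundedness produces a valid chain of estimates without modification, yielding the claimed modular inequality.

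First I would handle the finite-measure case. The sufficiency portion of Theorem~\ref{primissimo} should reduce $\int_\Omega (Mf)^{p(x)}\,dx$ to an $L^{p_+}$-norm estimate through a pointwise bound of the form $t^{p(x)} \leq 1 + t^{p_+}$ for $t\geq 0$ (legitimate since $p_+<\infty$, which follows from $\pp\not\equiv 1$ together with $p_+\leq q_-<\infty$), then use $|\Omega|<\infty$ to absorb constants into $c_2$ and the hypothesis $p_+\leq q_-$ to pass from $\int_\Omega |f|^{p_+}\,dx$ to $\int_\Omega |f|^{q(x)}\,dx + |\Omega|$. The only analytic fact about $M$ invoked in this chain is $\|Mf\|_{L^{p_+}(\Omega)} \leq c\|f\|_{L^{p_+}(\Omega)}$, which by hypothesis is also enjoyed by $T$.

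For the infinite-measure case, the sufficiency part of Theorem~\ref{main} should split $\Omega$ into $\Omega\setminus D$ (where $p=q$) and $D$ (where $p<q$), apply a Young-type pointwise estimate on $D$ that controls $(Mf)^{p(x)}$ by $c|f|^{q(x)} + \lambda^{-r(x)}$, and reduce the $\Omega\setminus D$ piece to a single $L^p$-norm inequality at the common asymptotic exponent $p_\infty$ supplied by $\pp\asmpt\qq$. The defect integral $\int_D \lambda^{-r(x)}\,dx$ is finite by~\eqref{eq:condsuffmaximal1} and is absorbed into $c_2$; once more, $M$ enters only through $L^p$-norm inequalities. Substituting $T$ for $M$ yields the result, with $c_1$ depending on $\pp,\qq$ and on the relevant $L^p$ operator norms of $T$, and $c_2$ depending in addition on $|\Omega|$ or on the value of the defect integral in~\eqref{eq:condsuffmaximal1}.

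The main obstacle is purely a bookkeeping one: one must verify that nowhere in the sufficiency arguments is a non-Lebesgue property of $M$ invoked, such as the pointwise bound $Mf\geq |f|$ a.e., a weak-type $(1,1)$ inequality, or comparison with sharp or dyadic maximal functions. The authors' remark preceding the corollary asserts that no such property appears; with that inspection completed, the proof of Corollary~\ref{cor:sio} reduces to a line-by-line transcription of the sufficiency direction of Theorems~\ref{primissimo} and~\ref{main}, with $T$ in place of $M$ throughout.
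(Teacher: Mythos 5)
Your proposal matches the paper's own route: the corollary is obtained exactly by observing that the sufficiency arguments for Theorems~\ref{primissimo} and~\ref{main} (via the chain $(iii)\Rightarrow(iv)\Rightarrow(i)$ in the infinite-measure case) invoke the maximal operator only through its boundedness on $L^{p_+}(\Omega)$, so $T$ can be substituted throughout. The only cosmetic difference is that you phrase the reductions via pointwise Young-type bounds while the paper splits over level sets (relative to $1$ or to $\omega$), which is the same estimate in different clothing.
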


The assumption on the operator $T$ is very general and is satisfied by
most of the classical operators of harmonic analysis:  for example, it
holds for Calder\'on-Zygmund singular integral operators and square
functions.   In fact, a close examination of the proof shows that we
can assume less:  given fixed $\pp$ and $\qq$, we only require that the
operator is bounded on $L^{p_+}(\Omega)$.   As a consequence, we can
prove a
modular inequality for the Fourier transform 
\[ \hat{f}(\xi) = \int_{\subRn} f(x)e^{-2\pi i x\cdot \xi}\,dx \]
on variable Lebesgue spaces, using the Plancherel theorem that
$\|\hat{f}\|_2=\|f\|_2$.  The importance of this result follows from
the fact that natural generalization of the Hausdorff-Young inequality
fails in the variable exponent setting.
(See~\cite[Section~5.6.10]{cruz2013variable} for complete details.)

\begin{cor}
Given $\pp, \, \qq \in \Pp(\R^n)$, $p_+=2$, suppose $p (\cdot)  \asmpt q (\cdot)$,
and~\eqref{eq:condsuffmaximal1} holds.  Then 
\[ \int_{\subRn} |\hat{f}(\xi)|^{p(\xi)}\,d\xi \leq c_1\int_{\subRn}
  |f(x)|^{q(x)}\,dx +c_2, \]
with positive constants $c_1,\,c_2$ that depend on $\pp$ and $\qq$ but
not on $f$. 
\end{cor}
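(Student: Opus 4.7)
The plan is to derive this statement as a direct application of Corollary~\ref{cor:sio} to the Fourier transform $T = F : f \mapsto \hat{f}$, on $\Omega = \R^n$. The structural hypotheses on the exponents, namely $\pp \asmpt \qq$ together with \eqref{eq:condsuffmaximal1}, are precisely the infinite-measure case of the sufficient conditions in Theorem~\ref{main}; consequently they match the hypotheses of Corollary~\ref{cor:sio} verbatim. What has to be verified is only the operator side, i.e.\ that $F$ is bounded on $L^{p_+}(\R^n)$ rather than on all $L^p$ for $1<p<\infty$ as \emph{a priori} demanded by the statement of Corollary~\ref{cor:sio}.

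This weaker operator-boundedness requirement is exactly what the remark following Corollary~\ref{cor:sio} records: a close reading of the sufficiency proof shows that, once $\pp$ and $\qq$ are fixed, the maximal operator enters only through its boundedness on $L^{p_+}(\Omega)$. In the present setting $p_+ = 2$, so the Plancherel identity $\|\hat f\|_{L^2(\R^n)} = \|f\|_{L^2(\R^n)}$ supplies the only norm bound that is needed. Plugging this into the strengthened form of Corollary~\ref{cor:sio} yields the desired modular inequality, with constants $c_1$, $c_2$ depending on $\pp$, $\qq$, $n$, and the operator norm of $F$ on $L^2(\R^n)$ (which equals $1$), but not on $f$. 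It is worth emphasizing that this argument bypasses the Hausdorff--Young inequality, which is known to fail in the variable-exponent setting.

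The real work is therefore concentrated in justifying the strengthened form of Corollary~\ref{cor:sio}: one must confirm that the proof of the implication $(iii)\Rightarrow(i)$ in Theorem~\ref{main} does not secretly invoke any pointwise property of $M$ (such as a weak-type or good-$\lambda$ inequality), but proceeds instead via a pointwise Young-type decomposition of $(Mf(x))^{p(x)}$, for instance of the form $\lambda^{q(x)-p(x)}(Mf(x))^{q(x)}\mathbf{1}_{\{|Mf|\ge 1/\lambda\}} + \lambda^{-r(x)}\mathbf{1}_{D}$, so that the modular integral is controlled purely by a multiple of $\|Mf\|_{L^{p_+}}^{p_+}$ plus $\rho_{r(\cdot),D}(1/\lambda)$. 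Once this reduction is recorded, the substitution $M \leadsto F$ is mechanical, and the corollary follows at once from Plancherel's theorem.
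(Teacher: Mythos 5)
Your proposal is correct and follows the paper's own route: the corollary is obtained precisely by noting that in the sufficiency direction of Theorem~\ref{main} (hence in Corollary~\ref{cor:sio}) the operator enters only through its boundedness on $L^{p_+}(\Omega)$ -- the pointwise bound $Mf\geq |f|$ is used only for necessity -- and then invoking Plancherel with $p_+=2$; the hypotheses $p(\cdot)\asmpt q(\cdot)$ and \eqref{eq:condsuffmaximal1} give, via the operator-free implication $(iii)\Rightarrow(iv)$, the auxiliary function $\omega$ needed to run that argument with $|\hat f|$ in place of $Mf$. One caveat about your final paragraph: the ``for instance'' decomposition you sketch is not the one that works. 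In the proof of $(iv)\Rightarrow(i)$ one compares $|\hat f(x)|$ with $\omega(x)=\lambda^{-r(x)/p(x)}$, not with $1/\lambda$, and raises the exponent only from $p(x)$ up to $p_+$, not to $q(x)$: raising to $q(x)$ would require controlling $\int_D |\hat f(x)|^{q(x)}\,dx$, which is exactly the kind of variable-exponent modular bound that is unavailable (indeed $q(x)\geq q_-=p_+$ on $D$, so such a term dominates the $L^{p_+}$ quantity rather than being dominated by it), while on the complementary set the threshold $1/\lambda$ only yields $\lambda^{-p(x)}$, which need not be integrable over $D$ when $|D|=\infty$, whereas $\omega(x)^{p(x)}=\lambda^{-r(x)}$ is integrable by \eqref{eq:condsuffmaximal1}. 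Since you offer that split only as an illustration and the paper's actual estimate does have the property you need (only the $L^{p_+}$ operator norm is used), this does not undermine your argument, but that particular decomposition would not close the proof as written.
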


\begin{remark}
  Modular inequalities for other operators that are bounded on
  $L^p(\Omega)$ have been extensively studied in the setting of Orlicz
  spaces: see, for example, {\cite{bongioanni2003modular,
      capone1995maximal, carro2000modular, carro2003some,
      fusco1990higher, kokilashvili1991weighted}}.  Modular
  inequalities in the variable Lebesgue spaces for operators other
  than the maximal operator have not been studied, though we refer the
  reader to~\cite{giannetti2011modular} for a modular interpolation
  inequality in variable Sobolev spaces.
\end{remark}

\begin{remark}
There is a certain parallel between Corollary~\ref{cor:sio} and the
theory of Rubio de Francia extrapolation in the scale of variable
Lebesgue spaces.  Roughly speaking, the theory of extrapolation says
that if an operator $T$ is bounded on $L^p(w)$, where $1<p<\infty$ and
$w$ is any weight in the Muckenhoupt $A_p$ class, then $T$ is bounded
on $L^{\pp}$ provided that the maximal operator is bounded on
$L^{\pp}$.  (See~\cite[Section~5.4]{cruz2013variable} for a precise
statement of extrapolation.)  We can restate Corollary~\ref{cor:sio}
as saying that if $T$ is bounded on $L^p(\Omega)$, and the maximal
operator satisfies a certain modular inequality, then (because
Theorems~\ref{primissimo} and~\ref{main} given necessary as well as
sufficient conditions) $T$ satisfies the same modular inequality
(possibly with different constants). 
\end{remark}

\begin{remark}
It would be of interest to generalize Corollary~\ref{cor:sio} and
modular inequalities on Orlicz spaces by considering the analogous
question in the scale of Musielak-Orlicz spaces~\cite{MR724434}.  It
would also be interesting to determine if the conditions in
Corollary~\ref{cor:sio} are necessary for any other 
operators to satisfy a modular inequality.
\end{remark}

\medskip

If we consider constant exponent functions $\pp=p$ and $\qq=q$, then
Theorems~\ref{primissimo} and~\ref{main}, and Corollary~\ref{cor:sio} have the following corollary.

\begin{cor}  \label{cormain}
Given $\Omega \subset \R^n$, suppose $| \Omega | <\infty$. If $1 < p \leqslant q < \infty$, then the following
  inequality holds
  \begin{equation}
    \int_{\Omega} M_{} f (x)^p  \hspace{0.17em} d_{} x \leqslant c_1 
    \int_{\Omega} |f (x) |^q  \hspace{0.17em} d_{} x + c_2,
    \label{eq:modconst}
  \end{equation}
  for every $f \in L^q (\Omega)$ and for some positive constants
  $c_1, c_2$ depending on $n, p, q, | \Omega |$, but independent of
  $f$. If $|\Omega|=\infty$, then inequality~\eqref{eq:modconst} holds
  if and only if $1 < p = q$.

Moreover, if $T$ is an operator that is bounded on $L^p(\Omega)$,
$1<p<\infty$, then these conditions are sufficient for $T$ to satisfy
the modular inequality
\[ \int_\Omega |Tf(x)|^p\,dx \leq c_1\int_\Omega |f(x)|^q\,dx +
  c_2. \]
\end{cor}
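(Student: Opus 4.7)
The plan is to derive Corollary~\ref{cormain} as a direct specialization of Theorems~\ref{primissimo} and~\ref{main} and of Corollary~\ref{cor:sio} to the constant exponents $\pp\equiv p$ and $\qq\equiv q$. The one case falling outside the immediate reach of those statements is $|\Omega|=\infty$ with $p=q$, where the set $D=\{x:p(x)<q(x)\}$ is empty and Theorem~\ref{main} does not apply; this gap will be filled by the classical $L^p$-boundedness of the maximal operator.

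For $|\Omega|<\infty$, the hypothesis $p\le q$ is exactly $p_+(\Omega)\le q_-(\Omega)$, so Theorem~\ref{primissimo} yields~\eqref{eq:modconst} with constants depending on $n,p,q,|\Omega|$. For $|\Omega|=\infty$ and $p=q$, inequality~\eqref{eq:modconst} with $c_2=0$ is the classical maximal inequality on $L^p(\Omega)$, valid for every $1<p<\infty$. It remains to show that when $|\Omega|=\infty$ and $p<q$ the modular inequality cannot hold. If it did, using $|f|\le Mf$ almost everywhere we would obtain
\[
  \int_\Omega |f(x)|^p\,dx \;\le\; c_1\int_\Omega |f(x)|^q\,dx + c_2
\]
for every $f\in L^q(\Omega)$, forcing the set-theoretic inclusion $L^q(\Omega)\subset L^p(\Omega)$, which is impossible on sets of infinite measure when $p<q$~\cite{kabaila1981inclusion,villani1985another}. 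This is exactly Example~\ref{example:infinite} with $Q=\emptyset$.

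For the statement concerning a general operator $T$ bounded on $L^p(\Omega)$, $1<p<\infty$, the same dichotomy applies and one invokes Corollary~\ref{cor:sio}. Its hypotheses are trivially verified: in the finite measure case, $p_+(\Omega)\le q_-(\Omega)$ is just $p\le q$; in the infinite measure case with $p=q$, constant exponents obviously satisfy $\pp\asmpt\qq$, and $D=\emptyset$ makes~\eqref{eq:condsuffmaximal1} vacuously true. I do not foresee any genuine obstacle: every step reduces to a previously established theorem, and the only mild point of care is the handling of the constant-exponent, infinite-measure case, which cannot be routed through Theorem~\ref{main} but is immediately supplied by the classical maximal inequality.
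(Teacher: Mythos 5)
Your proposal is correct and is essentially the derivation the paper intends: Corollary~\ref{cormain} is presented there as an immediate specialization of Theorems~\ref{primissimo} and~\ref{main} and Corollary~\ref{cor:sio} to constant exponents, which is exactly what you carry out (your necessity argument for $p<q$, $|\Omega|=\infty$ is the same inclusion argument as Example~\ref{example:infinite} with $Q=\emptyset$). Your extra care with the case $|\Omega|=\infty$, $p=q$, where $D=\emptyset$ places it outside the literal hypotheses of Theorem~\ref{main}, is a sound refinement and is settled exactly as the paper would settle it, by the classical $L^{p}$ boundedness of $M$ (equivalently, by Corollary~\ref{cor:sio}, whose conditions are vacuous when $D=\emptyset$).
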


\medskip

To prove Theorems~\ref{primissimo} and~\ref{main}, we will first prove
the following proposition which establishes a necessary condition
which for sets $\Omega$ of finite measure is also sufficient.

\begin{prop}  \label{thm:maintheorem}
Given $\pp,\, \qq \in \Pp (\Omega)$, if the modular
  inequality {\eqref{mainmodular}} holds, then 
  \begin{equation}
    p_+ (\Omega) \le q_- (\Omega) \hspace{0.17em} .
    \label{eq:mainconditionnew}
  \end{equation}
\end{prop}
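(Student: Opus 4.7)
The plan is to argue by contradiction: suppose the modular inequality holds but $p_+(\Omega) > q_-(\Omega)$, and test the inequality against a suitable sequence of functions that forces a power mismatch.

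First, pick real numbers $\alpha,\beta$ with $q_-(\Omega) < \beta < \alpha < p_+(\Omega)$. By definition of $p_+$ and $q_-$, the sets
\[ E = \{x \in \Omega : p(x) \geq \alpha\}, \qquad F = \{x \in \Omega : q(x) \leq \beta\} \]
have positive measure. By intersecting with a large ball if needed, I can choose bounded measurable sets $E_0 \subset E$ and $F_0 \subset F$ with $0<|E_0|,|F_0|<\infty$, and then enclose $E_0 \cup F_0$ in a single cube $Q^*$ with sides parallel to the axes.

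The test functions will be $f_k = k\,\chi_{F_0}$ for integer $k\ge 1$. The right-hand side of \eqref{mainmodular} is easy to bound from above: since $q(x)\le \beta$ on $F_0$ and $k\ge 1$,
\[ \int_\Omega |f_k(x)|^{q(x)}\,dx = \int_{F_0} k^{q(x)}\,dx \leq |F_0|\,k^{\beta}. \]
For the left-hand side, note that for every $x\in E_0\subset Q^*$, the cube $Q^*$ contains $x$, so
\[ M f_k(x) \geq \frac{1}{|Q^*|}\int_{Q^*}|f_k(y)|\,dy = \frac{k\,|F_0|}{|Q^*|}. \]
For $k$ large enough this ratio exceeds $1$, and since $p(x)\geq \alpha$ on $E_0$,
\[ \int_\Omega M f_k(x)^{p(x)}\,dx \geq \int_{E_0}\!\left(\frac{k\,|F_0|}{|Q^*|}\right)^{\!p(x)}\!dx \geq |E_0|\left(\frac{|F_0|}{|Q^*|}\right)^{\!\alpha}\! k^{\alpha}. \]

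Combining these two bounds with \eqref{mainmodular} gives
\[ |E_0|\left(\frac{|F_0|}{|Q^*|}\right)^{\alpha}\,k^{\alpha} \;\leq\; c_1\,|F_0|\,k^{\beta} + c_2 \]
for all $k$ sufficiently large. Since $\alpha>\beta$, the left-hand side grows strictly faster than the right-hand side as $k\to\infty$, which is a contradiction. This proves $p_+(\Omega)\leq q_-(\Omega)$.

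No step seems particularly hard: the only mildly delicate point is ensuring that one can place $E_0$ and $F_0$ inside a common axis-parallel cube, which is why I first pass to bounded subsets of $E$ and $F$. The strictness $\alpha>\beta$ (rather than $\alpha\ge \beta$) is essential at the final comparison of growth rates.
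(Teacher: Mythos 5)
Your proof is correct and is essentially the paper's argument: test \eqref{mainmodular} with large multiples of the characteristic function of a set where $q(\cdot)\le\beta$, bound $Mf_k$ from below on a set where $p(\cdot)\ge\alpha$ using a common cube containing both sets, and compare the growth rates $k^{\alpha}$ versus $k^{\beta}$ to reach a contradiction. The only difference is organizational: the paper first proves Lemma~\ref{Lemma:equivalence} to localize the failure of $p_+(\Omega)\le q_-(\Omega)$ to a single cube and then runs this construction inside that cube, whereas you inline that reduction by passing to bounded subsets of $\{p(\cdot)\ge\alpha\}$ and $\{q(\cdot)\le\beta\}$ and enclosing them in one cube $Q^*$ --- the same enclosing-cube observation that drives the lemma.
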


As a corollary to Proposition~\ref{thm:maintheorem}, together with the
classical theorem on the boundedness of the maximal operator on
$L^p(\Omega)$, $1<p<\infty$ (cf.~\cite{stein1971singular}), we
immediately get the following generalization of
Theorem \ref{thm:lerner-modular} to arbitrary domains and unbounded
exponent functions.  

\begin{cor}
  \label{Cor:Lernergen}
Given an open set $\Omega$ and $\pp \in
  \Pp (\Omega)$,  the modular inequality
  \[ \int_{\Omega} M_{} f (x)^{p (x)}  \hspace{0.17em} d_{} x \leqslant c_1 
     \int_{\Omega} |f (x) |^{p (x)}  \hspace{0.17em} d_{} x + c_2, \]
  with positive constants $c_1, c_2$ depending on $n, \pp, \qq$ and  $| \Omega |$ but
  independent of $f$, holds if and only if $\pp$ equals a constant
  $p>1$ almost everywhere.
\end{cor}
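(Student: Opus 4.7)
The sufficiency direction is immediate: if $\pp \equiv p$ is a constant with $p > 1$, the classical Hardy--Littlewood theorem \cite{stein1971singular} yields $\|Mf\|_{L^p(\Omega)} \leqslant C \|f\|_{L^p(\Omega)}$, and raising this to the $p$-th power produces the modular inequality with $c_1 = C^p$ and $c_2 = 0$.

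For the converse, the plan is to apply Proposition~\ref{thm:maintheorem} with $\qq \assign \pp$, a legal choice since the hypothesis coincides with \eqref{mainmodular} in this case. The proposition then gives $p_+(\Omega) \leqslant p_-(\Omega)$, which combined with the trivial reverse inequality $p_-(\Omega) \leqslant p_+(\Omega)$ forces $p_-(\Omega) = p_+(\Omega)$. Hence $\pp$ agrees a.e. with a single constant $p$, which is necessarily finite since $\pp$ takes values in $[1, \infty)$.

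It remains to exclude the endpoint $p = 1$. Suppose for contradiction that $p = 1$, so that the inequality becomes
\[
\int_\Omega Mf(x)\,dx \leqslant c_1 \int_\Omega |f(x)|\,dx + c_2.
\]
Since $\Omega$ is open, fix a ball $B(x_0, r_0) \subset \Omega$ and test this bound against the approximate identities $f_\varepsilon \assign \varepsilon^{-n}\chi_{B(x_0,\varepsilon)}$ for $0 < \varepsilon < r_0$. On the one hand $\|f_\varepsilon\|_{L^1(\Omega)} = |B(0,1)|$ is independent of $\varepsilon$; on the other hand, a direct cube estimate yields $Mf_\varepsilon(x) \geqslant C_n |x - x_0|^{-n}$ for every $x \in \Omega$ with $\varepsilon \leqslant |x - x_0| \leqslant r_0$. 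Integrating over the corresponding spherical shell produces a lower bound of order $\log(r_0/\varepsilon)$, which tends to $+\infty$ as $\varepsilon \to 0^+$ and contradicts the bound. Thus $p > 1$.

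The substantive analytic content of the argument is entirely absorbed into Proposition~\ref{thm:maintheorem}; the only step left to carry out independently is the elementary refutation of the $p = 1$ case, whose sole nontrivial ingredient is the openness of $\Omega$, used to locate an interior ball on which a concentrated bump can be built.
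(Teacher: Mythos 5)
Your proof is correct and follows essentially the same route as the paper, which obtains the corollary immediately from Proposition~\ref{thm:maintheorem} (applied with $\qq=\pp$, forcing $p_+(\Omega)\leq p_-(\Omega)$ and hence constancy) together with the classical $L^p$-boundedness of $M$ for the sufficiency direction. The one point the paper leaves implicit is the exclusion of the endpoint $p=1$; your approximate-identity argument on an interior ball, giving a logarithmically divergent lower bound for $\int_\Omega Mf_\varepsilon\,dx$ against a bounded right-hand side, settles that case correctly and is a worthwhile addition.
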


\begin{remark}
  Theorem \ref{Cor:Lernergen} does not
  contradict Theorem~\ref{thm:modular-log}, since in the latter result
  we need the additional hypothesis that $\|f\|_{L^{\pp}(\R^n)}\leq 1$. 
\end{remark}

\medskip

The remainder of this paper is organized as follows.  In
Section~\ref{section:prop} we first prove Proposition~\ref{thm:maintheorem}.
In Section~\ref{section:finite} we prove Theorem~\ref{primissimo} and
in Section~\ref{section:infinite} we prove Theorem~\ref{main}.

\section{Proof of Proposition \ref{thm:maintheorem}}
\label{section:prop}

We begin with a definition and a lemma.  Given a measurable set
$\Omega\subseteq \R^n$, $|\Omega|>0$, we denote
by $\mathcal{Q}_{\Omega}$ the set of open cubes $Q$ in $\R^n$ (whose sides are
parallel to the coordinate axes) such that $|\Omega\cap Q|>0$.

\begin{lemma} \label{Lemma:equivalence}
Given a set $\Omega\subseteq \R^n$, let $\pp \in \Pp (\Omega)$, $\qqq
\in \Pp (\Omega)$. Then the following conditions are equivalent:
\begin{description}[align=right,labelwidth=1.2cm]
\item[\tmtextmd{\tmtextup{$(i)$}}] $p_+ (Q) \leqslant q_- (Q)$ for every $Q \in
    \mathcal{Q}_{\Omega}$;
\medskip    
   \item[\tmtextmd{\tmtextup{$(i_{} i)$}}]$p_+ (\Omega) \leqslant q_- (\Omega)$.
\end{description}  
\end{lemma}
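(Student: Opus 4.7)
\medskip

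\noindent\textbf{Proof plan for Lemma \ref{Lemma:equivalence}.}

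The direction $(ii)\Rightarrow(i)$ is essentially immediate: for any $Q\in\mathcal{Q}_{\Omega}$, the set $Q\cap\Omega$ has positive measure and is contained in $\Omega$, so by monotonicity of $\operatorname{ess\,sup}$ and $\operatorname{ess\,inf}$ we have $p_+(Q)\le p_+(\Omega)$ and $q_-(Q)\ge q_-(\Omega)$. Combining these with the hypothesis $p_+(\Omega)\le q_-(\Omega)$ yields $p_+(Q)\le q_-(Q)$.

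For $(i)\Rightarrow(ii)$ I would argue by contradiction. Assume $p_+(\Omega)>q_-(\Omega)$ and pick two real numbers $\alpha,\beta$ with $p_+(\Omega)>\alpha>\beta>q_-(\Omega)$. By the very definitions of $p_+(\Omega)$ and $q_-(\Omega)$, the sets
\[
A\assign\{x\in\Omega\tmSep p(x)>\alpha\},\qquad B\assign\{x\in\Omega\tmSep q(x)<\beta\}
\]
both have positive (possibly infinite) Lebesgue measure. Since Lebesgue measure on $\R^n$ is $\sigma$-finite, I can then extract bounded subsets $A'\subseteq A$ and $B'\subseteq B$ with $0<|A'|<\infty$ and $0<|B'|<\infty$ (for instance by intersecting with a sufficiently large ball).

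The key step is now to produce a single cube that exposes the failure of $(i)$. Since $A'\cup B'$ is bounded, I can choose an open cube $Q\subset\R^n$ with axis-parallel sides that contains $A'\cup B'$. Then $|Q\cap\Omega|\ge|A'|>0$, so $Q\in\mathcal{Q}_{\Omega}$. On $Q\cap\Omega$ the subset $A'$ has positive measure and $p>\alpha$ there, which forces $p_+(Q)\ge\alpha$; symmetrically, $B'\subseteq Q\cap\Omega$ has positive measure with $q<\beta$ there, giving $q_-(Q)\le\beta$. Therefore
\[
p_+(Q)\ge\alpha>\beta\ge q_-(Q),
\]
contradicting $(i)$.

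The only real obstacle is the existence of a common cube $Q$ with $|Q\cap A|>0$ and $|Q\cap B|>0$, which would be a genuine issue if $A$ and $B$ were allowed to be unbounded and far apart; this is precisely what the $\sigma$-finiteness reduction to bounded subsets $A',B'$ handles. Everything else is a routine manipulation of essential sup/inf on subsets of positive measure.
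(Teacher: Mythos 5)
Your proof is correct, and it takes a somewhat different route from the paper's for the nontrivial direction. The paper proves $(i)\Rightarrow(ii)$ directly: it takes a countable cover $\{Q_n\}$ of $\Omega$ by cubes in $\mathcal{Q}_{\Omega}$, shows $p_+(Q_m)\leq q_-(Q_n)$ for all $m,n$ by enclosing $Q_m\cup Q_n$ in a single larger cube $Q_{m,n}\in\mathcal{Q}_{\Omega}$ and applying $(i)$ there, and then takes $\sup_m$ and $\inf_n$, using that the essential sup (resp.\ inf) over $\bigcup_m Q_m$ equals $\sup_m p_+(Q_m)$ (resp.\ $\inf_n q_-(Q_n)$). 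You instead argue by contradiction: assuming $p_+(\Omega)>q_-(\Omega)$, you pick $\alpha,\beta$ strictly between the two values, extract bounded positive-measure sets $A'\subseteq\{p>\alpha\}$ and $B'\subseteq\{q<\beta\}$, and enclose $A'\cup B'$ in one cube $Q\in\mathcal{Q}_{\Omega}$ on which $p_+(Q)\geq\alpha>\beta\geq q_-(Q)$, violating $(i)$. Both arguments hinge on the same geometric fact that any two bounded pieces of $\Omega$ sit inside a common cube of $\mathcal{Q}_{\Omega}$; yours is arguably more elementary in that it avoids the covering step and the union identity for essential sup/inf, while the paper's direct version yields the inequality without passing through level sets or contradiction. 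Your handling of the one genuine pitfall (unboundedness of the level sets) via intersection with a large ball is exactly what is needed, and the case $p_+(\Omega)=\infty$ causes no trouble in your choice of $\alpha,\beta$.
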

  

\begin{proof}
  The fact that $(i_{} i)$ implies $(i)$ is easy: for any $Q \in \mathcal{Q}_{\Omega}$
  we have
  \[ p_+ (Q) = p_+ (Q \cap \Omega) \leqslant p_+ (\Omega) \leqslant q_-
     (\Omega) \leqslant q_- (Q \cap \Omega) = q_- (Q) . \]
 
 In order to prove that $(i)$ implies $(i_{} i)$, let $\{Q_n\}_{n \in
   \N}$ be a countable cover of $\Omega$  by elements of
  $\mathcal{Q}_{\Omega}$. We then have that if $p_+ (Q) \le q_- (Q)$ for
  every $Q \in \mathcal{Q}_{\Omega}$, then
  \begin{equation}
    p_+ (Q_m) \le q_- (Q_n) \qquad \forall m, n \in \N . \label{eq:fundobs}
  \end{equation}
To see this, note that for every $m, n \in \N$, there exists a cube $Q_{m, n} \in
  \mathcal{Q}_{\Omega}$ such that $Q_m \cup Q_n \subseteq Q_{m, n}$.  By
  hypothesis $p_+ (Q_{m, n}) \le q_- (Q_{m, n})$, so
  $$p_+ (Q_m) \leqslant
  p_+ (Q_{m, n}) \leqslant q_- (Q_{m, n}) \leqslant q_- (Q_n)\, .$$
  
  Now, if we first take the supremum over $m \in \N$ and then take the
  infimum over $n \in \N$, by~\eqref{eq:fundobs} we get
  $\sup_{m \in \N} p_+ (Q_m) \leqslant \inf_{n \in \N} q_- (Q_n)$.
  Therefore,
  \begin{align*}
    p_+ (\Omega) = p_+ \bigg( \underset{m \in \N}{\bigcup} Q_m \bigg) & =
    \sup_{m \in \N} p_+ (Q_m) \\
		&\leqslant \inf_{n \in \N} q_- (Q_n) = q_- \bigg(
    \underset{n \in \N}{\bigcup} Q_n \bigg) = q_- (\Omega) .
  \end{align*}
\end{proof}

The following argument is inspired by Example~\ref{primoesempio} and
is similar to the proof of Theorem~\ref{thm:lerner-modular}
in~\cite[Thm. 5.1]{izuki2013}.

\begin{proof}[Proof of Proposition \ref{thm:maintheorem}] 
If {\eqref{eq:mainconditionnew}} does not hold, then by Lemma
  \ref{Lemma:equivalence} there exists a cube $Q \in \mathcal{Q}_{\Omega}$
  such that $p_+ (Q) > q_- (Q)$. Let $\alpha, \beta$ be such that
  \[ q_- (Q) < \alpha < \beta < p_+ (Q) \hspace{0.17em} . \]
  Let $E_{\beta} \subset Q \cap \Omega$, $|E_{\beta} | > 0$, be such that $p
  (x) \ge \beta$ for a.e. $x \in E_{\beta}$. Similarly, let $E_{\alpha}
  \subset Q \cap \Omega$, $|E_{\alpha} | > 0$, be such that $q (x) \le \alpha$
  for a.e. $x \in E_{\alpha}$.  Define $f = \lambda \chi_{E_{\alpha}}$,
  where $\lambda > 1$. Then for all $z\in Q$,
  \[ M_{} f (z) \ge \frac{1}{| Q |} \int_Q |f (y) |  \hspace{0.17em} d_{} y =
     \frac{\lambda |E_{\alpha} |}{|Q|}. \]
  Moreover, if $\lambda > | Q | / | E_{\alpha} |$, then $(\lambda |E_{\alpha} |
  / |Q|)^{p (x)} \ge (\lambda |E_{\alpha} | / | Q |)^{\beta}$ for every $x \in
  E_{\beta}$.  Hence,
  \[ \int_{\Omega} M_{} f (x)^{p (x)} d_{} x \geqslant \int_{E_{\beta}} \left(
     \frac{\lambda |E_{\alpha} |}{|Q|} \right)^{p (x)} d_{} x \geqslant
     |E_{\beta} | \left( \frac{\lambda |E_{\alpha} |}{|Q|} \right)^{\beta} .
  \]
  On the other hand,
  \[ \int_{\Omega} |f (x) |^{q (x)}  \hspace{0.17em} d_{} x =
     \int_{E_{\alpha}} \lambda^{q (x)}  \hspace{0.17em} d_{} x \le |E_{\alpha}
     | \lambda^{\alpha}. \]
 Therefore, if {\eqref{mainmodular}} holds, then we must have that
  \[ |E_{\beta} | \left( \frac{\lambda |E_{\alpha} |}{|Q|} \right)^{\beta} \le
     c_1 |E_{\alpha} | \lambda^{\alpha} + c_2 \]
  for all $\lambda$ sufficiently large, which is a contradiction since $\alpha <
  \beta$.
\end{proof}

\section{Proof of Theorem~\ref{primissimo}}
\label{section:finite}

By Proposition~\ref{thm:maintheorem} we have that if the modular
inequality~\eqref{mainmodular} holds, then $p_+(\Omega)\leq
q_-(\Omega)$.  Therefore, it remains to show that this condition is
sufficient. 

Fix a set $\Omega$ and $\pp,\,\qq \in \Pp(\Omega)$ such
that $p_+(\Omega)\leq q_-(\Omega)$, and fix a function $f$.   Given a
set $E\subseteq \Omega$, we define
\[ I(E) = \int_E Mf(x)^{p(x)}\,dx, \qquad F(E) = \int_E
  |f(x)|^{p_+}\,dx,  \]
and
\[ D_1(Mf) = \{ x\in \Omega : Mf(x)>1 \}, \qquad 
D_1(f) = \{ x\in \Omega : |f(x)|>1 \}. \]

We now estimate as follows:
\[ \int_\Omega Mf(x)\,dx = I(D_1(Mf)) + I(\Omega\setminus
  D_1(Mf)).  \]
We immediately have that $I(\Omega\setminus
  D_1(Mf)) \leq |\Omega|$.  On the other hand, since $\pp\not\equiv
  1$, $p_+>1$, so the maximal
  operator is bounded on $L^{p_+}(\Omega)$.  Hence,
\[  I(D_1(Mf))  \leq \int_{D_1(Mf)} Mf(x)^{p_+}\,dx \leq c_{p_+,n}
\int_\Omega |f(x)|^{p_+}\,dx = c_{p_+,n} F(\Omega).  \]

To estimate $F(\Omega)$ we argue similarly:  since $p_+(\Omega)\leq q_-(\Omega)$,
\[ F(\Omega) = F(D_1(f))+ F(\Omega\setminus D_1(f))
\leq \int_{D_1(f)} |f(x)|^{q(x)}\,dx + |\Omega|. \]
If we combine all of these inequalities, we get
\[ \int_\Omega Mf(x)\,dx \leq c_{p_+,n}\int_\Omega |f(x)|^{q(x)}\,dx +
(c_{p_+,n}+1)|\Omega|. \]
This completes the proof of sufficiency.

\section{Proof of Theorem \ref{main}}
\label{section:infinite}

We will prove the following chain of implications:
\[ 
(i) \Rightarrow (i_{} i) \Rightarrow (i_{} i_{} i) \Rightarrow (i_{} v)
     \Rightarrow (i) . 
\]

\medskip
\noindent $[(i) \Rightarrow (i_{} i)]$ We first prove that if the modular inequality
  {\eqref{mainmodular}} holds,  then $L^{q (\cdot)} (\Omega) \hookrightarrow
  L^{p (\cdot)} (\Omega)$.  Since $L^{p (\cdot)}$ is a Banach function space,
  the embedding $L^{q (\cdot)} (\Omega) \hookrightarrow L^{p (\cdot)}
  (\Omega)$ is equivalent (cf. {\cite[Thm. 1.8]{bennett1988interpolation}}) to
  the set-theoretical inclusion $L^{q (\cdot)} (\Omega) \subseteq L^{p
  (\cdot)} (\Omega)$.   Since $M_{} f (x) \geqslant | f (x) |$ a.e. in $\Omega$,
  if {\eqref{mainmodular}} holds, then
  $\rho_{p (\cdot), \Omega} (f) \leqslant c_1 \rho_{q (\cdot), \Omega} (f) +
  c_2$. 
Fix  $f \in L^{q (\cdot)} (\Omega)$;  then for some $\lambda > 0$, $\rho_{q (\cdot), \Omega} (f /
  \lambda) < \infty$.  Therefore,
  \begin{equation*}
    \rho_{p (\cdot), \Omega} (f / \lambda) \leqslant c_1 \rho_{q (\cdot),
    \Omega} (f / \lambda) + c_2 < \infty,
  \end{equation*}
and so  $f \in L^{p (\cdot)} (\Omega)$.
  
 We now prove that if {\eqref{mainmodular}}
  holds, then $p (\cdot) \asmpt q (\cdot)$.   Given  any measurable set $E \in
  \mathcal{F}_{\Omega}$ and any measurable function $f : E \subseteq \Omega
  \rightarrow \R$, \eqref{mainmodular} implies that
  \begin{equation}
    \int_E | f (x) |^{p (x)} d_{} x \leqslant c_1 \int_E | f (x) |^{q (x)}
    d_{} x + c_2, \label{eq:implmodularinfmeasure}
  \end{equation}
  with $c_1, c_2 > 0$ the same constants.
    Fix $E \in
  \mathcal{F}_{\Omega}$ and define $f (x) = \lambda \cdot \chi_{B_{\delta}\cap E} (x)$, $0 < \lambda <
  1$ and $B_{\delta}=B(0,\delta)$. 
Since $0< \lambda < 1$, for $x\in E$, $\lambda^{p_+ (E)} \leqslant
  \lambda^{p (x)}$ and $\lambda^{q (x)} \leqslant \lambda^{q_- (E)}$.
  Therefore, by {\eqref{eq:implmodularinfmeasure}}, 
  \begin{align*}
    | E \cap B_{\delta} | \, \lambda^{p_+ (E)} \; & \leqslant  \; \int_{E \cap
    B_{\delta}} \lambda^{p (x)} d_{} x  \\
    & \leqslant\; c_1 \int_{E \cap B_{\delta}}
    \lambda^{q (x)} d_{} x + c_2 
    \; \leqslant \; c_1 | E \cap B_{\delta} |
    \lambda^{q_- (E)} + c_2 . 
  \end{align*}
  Since $| E \cap B_{\delta} | \rightarrow \infty$ as $\delta \rightarrow
  \infty$, we get that $\lambda^{p_+ (E)} \leqslant c_1
  \lambda^{q_- (E)} + c_2 | E \cap B_{\delta} |^{- 1}$ for $\delta$
sufficiently large. If we take the limit as $\delta \rightarrow
  \infty$,  we get  that if {\eqref{mainmodular}} holds,
  then
  $$\lambda^{p_+ (E)} \leqslant c_1 \lambda^{q_- (E)}\qquad \forall\, 0<\lambda<1\, .$$ 
Since $p_+ (E) \leqslant q_-(E)$ we must have that $p_+ (E) = q_- (E)$ and $c_1 \geqslant 1$. 

Finally, since by Theorem \ref{thm:maintheorem},  $p_+ (\Omega) \leqslant q_-
  (\Omega)$,  and since
  $p_+ (E) \leqslant p_+ (\Omega) \leqslant q_- (\Omega) \leqslant q_- (E)$,
  we
  get that $p (\cdot) \asmpt q (\cdot)$.
  
\bigskip

  \noindent  $[(i_{} i) \Rightarrow (i_{} i_{} i)]$   As noted above,
  this implication follows from the fact that the embedding  $L^{q (\cdot)} (\Omega)
    \hookrightarrow L^{p (\cdot)} (\Omega)$ is equivalent to assuming
    $p(x)\leq q(x)$ and \eqref{eq:condsuffmaximal1} holds.  (See~\cite[Thm.
    2.45]{cruz2013variable}.)

  \bigskip

  \noindent $[(i_{} i_{} i) \Rightarrow (i_{} v)]$ We explicitly
  construct the function $\omega$.  Since
  $p (\cdot) \asmpt q (\cdot)$, we claim that there exists
  $\kappa > 1$ such that $| E_{q (\cdot), \kappa} | < \infty$, where
  $E_{\qq,\kappa}= \{ x \in \Omega : q(x)>\kappa \}$.  For if not,
  then for all $\kappa>1$, $| E_{q (\cdot), \kappa} | = \infty$.  In
  particular, if we set $\kappa=p_+(\Omega)+1$, then
  $E_{q (\cdot),\kappa} \in \mathcal{F}_\Omega$ and
  $q_-(E_{q (\cdot),\kappa}) >p_+(\Omega) \geq p_+(E_{q
    (\cdot),\kappa})$, a contradiction.

Fix such a $\kappa$ and define
  \begin{equation*}
    \omega (x) \assign \left\{ \begin{array}{ll}
      \lambda^{- r (x) / p (x)} & x\in  D \backslash E_{q (\cdot),
      \kappa},\\
      1 & x\in D \cap E_{q (\cdot), \kappa},
    \end{array} \right.
  \end{equation*}
  where $r (\cdot)$ is the defect exponent defined by $\frac{1}{r (x)} =
  \frac{1}{p (x)} - \frac{1}{q (x)}$.  Since $\lambda > 1$, 
  we have that $0 < \omega (\cdot) \leqslant 1$ and 
  \begin{align*}
    \omega (\cdot)^{- | p_+ - p (\cdot) |} \; = \; \lambda^{\frac{p_+ - p (\cdot)}{q
    (\cdot) - p (\cdot)} q (\cdot)} \leqslant \lambda^{q (\cdot)} \leqslant \;
    \lambda^{\kappa} & \text{on }  D \backslash E_{q (\cdot), \kappa}, \\
    \omega (\cdot)^{- | q (\cdot) - p_+ |} \; = \; \lambda^{\frac{q (\cdot) - p_+}{q
    (\cdot) - p (\cdot)} q (\cdot)} \leqslant \lambda^{q (\cdot)} \leqslant \;
    \lambda^{\kappa} & \text{on } D \backslash E_{q (\cdot), \kappa} . 
  \end{align*}
  Moreover, $\omega (\cdot)^{- | p_+ - p (\cdot) |} = \omega (\cdot)^{- | q (\cdot) - p_+ |} =
  1 \leqslant \lambda^{\kappa}$ on the set $D \cap E_{q (\cdot), \kappa}$ and
  therefore {\eqref{eq:condsonomega2}} holds. 

Finally, to prove  {\eqref{eq:condsonomega}} we estimate as follows:
  \begin{equation*}
    \rho_{p (\cdot), D} (\omega) = \int_{D \backslash E_{q (\cdot), \kappa}}
    \lambda^{- r (x)} d_{} x + | E_{q (\cdot), \kappa} | \; \leqslant \;
    \int_{D}
    \lambda^{- r (x)} d_{} x + | E_{q (\cdot), \kappa} | < \infty .
  \end{equation*}

\bigskip

\noindent   $[(i_{} v) \Rightarrow (i)]$ 
The proof of this implication is similar to the proof of sufficiency
in the proof of Theorem~\ref{primissimo}.  However, since $|\Omega|=\infty$ we
need to introduce $\omega$ and use $\rho_{\pp,D}(\omega)$ in place of
$|\Omega|$.  

As before, given a measurable function $f$ and a measurable set $E
\subseteq \Omega$, define 
\[ I (E) = \int_E M_{} f (x)^{p (x)} d_{} x, \qquad F(E) = \int_E
  |f(x)|^{p_+}\,dx. \]
Recall that $D = \{ x \in \Omega : p (x) < q (x) \}$ and write 
\[ \int_\Omega Mf(x)^{p(x)}\,dx = I(D) + I(\Omega \setminus D). \]
Since $p_+ \leqslant q_-$, we have
$p (\cdot) = p_+ = q_- = q (\cdot)$ on $\Omega \setminus
D$. Therefore, since $\pp\not\equiv 1$,  $p_+>1$, so the maximal operator is bounded on
$L^{p_+}(\Omega)$.  Hence,
\begin{equation*}
  I (\Omega \setminus D) = \int_{\Omega \setminus D} M f (x)^{p_+} dx
  \leqslant c_{p^+, n} \, F (\Omega) 
\end{equation*}

To estimate $I (D)$, define
$D_{\omega} (M f) = \{ x \in D : Mf(x) > \omega (x) \}$ where
$\omega$ is the function from  our hypothesis $(iv)$.  Then
\begin{align*}
  I (D) & =  \int_{D \setminus D_{\omega} (M_{} f)} M_{} f (x)^{p (x)} d_{}
  x + \int_{D_{\omega} (M f)} M f (x)^{p (x)} d x \\
  & \leqslant \rho_{p (\cdot), D} (\omega) + \int_{D_{\omega} (M_{} f)}
  \left( \frac{M_{} f (x)}{\omega (x)} \right)^{p (x)} \; \omega (x)^{p (x)}
  d_{} x.
\intertext{Since $M_{} f (\cdot) / \omega (\cdot) > 1$ on $D_{\omega} (M_{} f)$,}
 & \leqslant  \rho_{p (\cdot), D} (\omega) + \int_{D_{\omega} (M_{}
  f_2)} \left( \frac{M_{} f (x)}{\omega (x)} \right)^{p_+} \; \omega (x)^{p
  (x)} d_{} x \\
  & \leqslant  \rho_{p (\cdot), D} (\omega) + \| \omega^{- | p_+ - p (\cdot)
  |} \|_{L^{\infty} (D)} \int_D (M_{} f (x))^{p_+} d_{} x .
\intertext{Again since $M$ is bounded on $L^{p_+}(\Omega)$,}
& \leqslant  \rho_{p (\cdot), D} (\omega) + c_{n, p^+} \cdot \|
  \omega^{- | p_+ - p (\cdot) |} \|_{L^{\infty} (D)} \, F (\Omega).
\end{align*}

If we combine the above inequalities we get 
\begin{equation}
  I (\Omega) \leqslant \left[ c_{n, p^+} (1 + \| \omega^{- | p_+ - p (\cdot)
  |} \|_{L^{\infty} (D)})_{_{_{_{_{_{}}}}}} \right] F (\Omega) + \rho_{p
  (\cdot), D} (\omega), \label{eq:fundestimateI}
\end{equation}
so to complete the proof we need to  estimate $F (\Omega)=F (D) + F
(\Omega \setminus D)$. As
before we have $p (\cdot) = p_+ =
q_- = q (\cdot)$ on $\Omega \setminus D$, so 
\begin{equation*}
  F (\Omega \setminus D) = \int_{\Omega \setminus D} | f (x) |^{p_+} =
  \int_{\Omega \setminus D} | f (x) |^{q (x)} d_{} x. \label{eq:F2OmmD}
\end{equation*}

To estimate $F (D)$,  let $D_{\omega} (f) = \{ x \in D_{} :
|f (x) | > \omega (x) \}$.  Since $0 < \omega \leqslant 1$
and $p_+ \geqslant p (\cdot)$, we have $\rho_{p_+, D} (\omega) \leqslant
\rho_{p (\cdot), D} (\omega)$.  Therefore,
\begin{align*}
  F (D) & =  \int_{D \setminus D_{\omega} (f)} | f (x) |^{p_+} d_{} x +
  \int_{D_{\omega} (f)} | f (x) |^{p_+} d_{} x \\
  & \leqslant  \rho_{p (\cdot), D} (\omega) + \int_{D_{\omega} (_{} f)}
  \left( \frac{| f (x) |}{\omega (x)} \right)^{p_+} \cdot \omega (x)^{p_+}
  d_{} x. 
\intertext{Since $_{} | f (\cdot) | / \omega (\cdot) > 1$ on $D_{\omega} (_{} f)$}
 & \leqslant  \rho_{p (\cdot), D} (\omega) + \int_{D_{\omega} (_{} f)}
  \left( \frac{| f (x) |}{\omega (x)} \right)^{q (x)} \cdot \omega (x)^{p_+}
  d_{} x \nonumber\\
  & \leqslant  \rho_{p (\cdot), D} (\omega) + \| \omega^{- | q (\cdot) - p_+
  |} \|_{L^{\infty} (D)} \int_D | f (x) |^{q (x)} d_{} x. 
\end{align*}

If we combine the previous two estimates, we get
\begin{align*}
  F (\Omega) & \leqslant  \int_{\Omega \setminus D} | f (x) |^{q (x)} d_{} x
  + \rho_{p (\cdot), D} (\omega) + \| \omega^{- | q (\cdot) - p_+ |}
  \|_{L^{\infty} (D)} \int_D | f (x) |^{q (x)} d_{} x \\
  & \leqslant  (1 + \| \omega^{- | q (\cdot) - p_+ |} \|_{L^{\infty} (D)})
  \int_{\Omega} | f (x) |^{q (x)} d_{} x + \rho_{p (\cdot), D} (\omega) . 
  \label{eq:F2D}
\end{align*}
Together with inequality~\eqref{eq:fundestimateI} this gives us the
modular inequality~\eqref{mainmodular}.  This completes the proof.


\begin{thebibliography}{10}

\bibitem{acerbi2002regularity}
E.~Acerbi and G.~Mingione, \emph{Regularity results for stationary
  electro-rheological fluids}, Arch. Ration. Mech. Anal. \textbf{164} (2002),
  no.~3, 213--259. 

\bibitem{amaziane2009nonlinear}
B.~Amaziane, L.~Pankratov, and A.~Piatnitski, \emph{Nonlinear flow through
  double porosity media in variable exponent {S}obolev spaces}, Nonlinear Anal.
  Real World Appl. \textbf{10} (2009), no.~4, 2521--2530. 

\bibitem{bennett1988interpolation}
C.~Bennett and R.~Sharpley, \emph{Interpolation of operators}, Pure and
  Applied Mathematics, vol. 129, Academic Press, Inc., Boston, MA, 1988.
 
\bibitem{blomgren1997total}
P.~Blomgren, T.~F Chan, P.~Mulet, and 
  C.-K.~Wong, \emph{Total variation image restoration: numerical
  methods and extensions}, in \tmtextit{Proceedings of the International Conference on Image Processing, 1997}, IEEE, \textbf{3} 1997, 384---387.
  
\bibitem{bongioanni2003modular}
B.~Bongioanni, \emph{Modular inequalities of maximal operators in {O}rlicz
  spaces}, Rev. Un. Mat. Argentina \textbf{44} (2003), no.~2, 31--47 (2004).
  

\bibitem{capone1995maximal}
C.~Capone and A.~Fiorenza, \emph{Maximal inequalities in weighted
  {O}rlicz spaces}, Rend. Accad. Sci. Fis. Mat. Napoli (4) \textbf{62} (1995),
  213--224 (1996). 

\bibitem{carro2000modular}
M.~J.~Carro and H.~Heinig, \emph{Modular inequalities for the
  {C}alder\'on operator}, Tohoku Math. J. (2) \textbf{52} (2000), no.~1,
  31--46. 

\bibitem{carro2003some}
M.~J.~Carro and L.~Nikolova, \emph{Some extensions of the
  {M}arcinkiewicz interpolation theorem in terms of modular inequalities}, J.
  Math. Soc. Japan \textbf{55} (2003), no.~2, 385--394. 

\bibitem{cekic2012lp}
B.~Cekic, A.~V.~Kalinin, R.~A.~Mashiyev, and M.~Avci,
  \emph{{$L^{p(x)}(\Omega)$}-estimates of vector fields and some applications
  to magnetostatics problems}, J. Math. Anal. Appl. \textbf{389} (2012), no.~2,
  838--851. 

\bibitem{CFtransAMS}
D.~Cruz-Uribe and A.~Fiorenza, \emph{{$L\log L$} results for the maximal
  operator in variable {$L^p$} spaces}, Trans. Amer. Math. Soc. \textbf{361}
  (2009), no.~5, 2631--2647. 

\bibitem{cruz2002maximal}
D.~Cruz-Uribe, A.~Fiorenza, and C.~J.~Neugebauer, \emph{The maximal function on
  variable {$L^p$} spaces}, Ann. Acad. Sci. Fenn. Math. \textbf{28} (2003),
  no.~1, 223--238.

\bibitem{cruz2004corrections}
\bysame, \emph{Corrections to: ``{T}he maximal function on variable {$L^p$}
  spaces'' [{A}nn. {A}cad. {S}ci. {F}enn. {M}ath. {\bf 28} (2003), no. 1,
  223--238; mr1976842]}, Ann. Acad. Sci. Fenn. Math. \textbf{29} (2004), no.~1,
  247--249. 

\bibitem{cruz2003hardy}
D.~Cruz-Uribe, \emph{The {H}ardy-{L}ittlewood maximal operator on
  variable-{$L^p$} spaces}, Seminar of {M}athematical {A}nalysis
  ({M}alaga/{S}eville, 2002/2003), Colecc. Abierta, vol.~64, Univ. Sevilla
  Secr. Publ., Seville, 2003, pp.~147--156. 

\bibitem{cruz2013variable}
D.~Cruz-Uribe and A.~Fiorenza, \emph{Variable {L}ebesgue spaces},
  Applied and Numerical Harmonic Analysis, Birkh\"auser/Springer, Heidelberg,
  2013, Foundations and harmonic analysis. 

\bibitem{diening2011lebesgue}
L.~Diening, P.~Harjulehto, P.~H\"ast\"o, and M.~R{\r u}{\v{z}}i{\v{c}}ka, \emph{Lebesgue and {S}obolev spaces with variable exponents}, Lecture
  Notes in Mathematics, vol. 2017, Springer, Heidelberg, 2011. 

\bibitem{zuazo2001fourier}
J.~Duoandikoetxea, \emph{Fourier analysis}, Graduate Studies in
  Mathematics, vol.~29, American Mathematical Society, Providence, RI, 2001,
  Translated and revised from the 1995 Spanish original by D.~Cruz-Uribe.
 

\bibitem{fusco1990higher}
N.~Fusco and C.~Sbordone, \emph{Higher integrability of the gradient of
  minimizers of functionals with nonstandard growth conditions}, Comm. Pure
  Appl. Math. \textbf{43} (1990), no.~5, 673--683. 

\bibitem{futamuramizuta}
T.~Futamura and Y.~Mizuta, \emph{Maximal functions for {L}ebesgue
  spaces with variable exponent approaching 1}, Hiroshima Math. J. \textbf{36}
  (2006), no.~1, 23--28. 

\bibitem{giannetti2011modular}
F.~Giannetti, \emph{The modular interpolation inequality in {S}obolev
  spaces with variable exponent attaining the value 1}, Math. Inequal. Appl.
  \textbf{14} (2011), no.~3, 509--522. 

\bibitem{hastoappr1}
P.~H\"ast\"o, \emph{The maximal operator in {L}ebesgue spaces with
  variable exponent near 1}, Math. Nachr. \textbf{280} (2007), no.~1-2, 74--82.
  
\bibitem{izuki2013}
M.~Izuki, E.~Nakai, and Y.~Sawano, \emph{The
  {H}ardy-{L}ittlewood maximal operator on {L}ebesgue spaces with variable
  exponent}, Harmonic analysis and nonlinear partial differential equations,
  RIMS K\^oky\^uroku Bessatsu, B42, Res. Inst. Math. Sci. (RIMS), Kyoto, 2013,
  pp.~51--94.
  
  
\bibitem{kabaila1981inclusion}
V.~Kabaila, 
\emph{Inclusion of the
  space $L^p (\mu)$ in $L^r (\nu)$}, Litovsk. Mat. Sb.
  \textbf{21} (1981), no.~4, 143--148.  
  
\bibitem{kokilashvili1991weighted}
V.~Kokilashvili  and  M.~Krbec, \emph{Weighted inequalities in Lorentz and Orlicz
  spaces}, World Scientific, 1991.
    
\bibitem{lerner2005modular}
A.~K.~Lerner, 
\emph{On modular
  inequalities in variable $L^p$ spaces},
  Archiv der
  Math.
   \textbf{85} (2005), no.~6, 538--543.  
  
\bibitem{MR724434}
J.~Musielak.
\newblock {\em Orlicz Spaces and Modular Spaces}, volume 1034 of {\em Lecture
  Notes in Mathematics}.
\newblock Springer-Verlag, Berlin, 1983.

\bibitem{MR2057644}
A.~Nekvinda, {\em  Hardy-{L}ittlewood maximal operator on {$L\sp
    {p(x)}(\mathbb R^n)$}}, Math. Inequal. Appl., \textbf{7} (2004), no.~2,
255--265. 

  \bibitem{ruzicka2000electrorheological}
  M.~R{\r u}{\v{z}}i{\v{c}}ka,
\emph{Electrorheological fluids: modeling and mathematical
  theory}, Lecture
  Notes in Mathematics, vol. 1748, Springer-Verlag, Berlin, 2000.
  
  
\bibitem{stein1971singular}
E.~M.~Stein, \emph{Singular
  integrals and differentiability properties of functions}, 
  vol.~30. Princeton Univ. Press, 1971.

  
\bibitem{villani1985another}
A.~Villani, 
\emph{Another note on
  the inclusion $L^p (\mu) \subset L^q (\mu)$}, 
  Amer. Math. Monthly 
   \textbf{92} (1985), no.~7, 485--487.
  
\bibitem{zhikov1997meyers}
V.~V.~Zhikov,
\emph{Meyer-type estimates for solving the nonlinear Stokes system}, Differential Equations
  \textbf{33} (1997), no.~1, 108--115.

\end{thebibliography}
\end{document}